\documentclass[11pt]{article}
\usepackage{epsfig}
\usepackage{amssymb,amsmath,amsthm,amscd}
\usepackage{latexsym}
\usepackage{fixltx2e}
\usepackage{mathrsfs}
\usepackage{verbatim}

\pagestyle{plain}
\setlength{\textheight}{8.3in}
\setlength{\textwidth}{6.5in}
\setlength{\evensidemargin}{0 in}
\setlength{\oddsidemargin}{0in}
\setlength{\topmargin}{0.0in}
\setlength{\parskip}{1ex}
\setlength{\parindent}{1em}

\newtheorem{thm}{Theorem}[section]
\newtheorem{prop}[thm]{Proposition}

\newtheorem{lem}[thm]{Lemma}

\theoremstyle{definition}
\newtheorem{defn}[thm]{Definition}

\newcounter{labelflag} \setcounter{labelflag}{0}

\newcommand{\Label}[1]{
                       \ifnum\thelabelflag=1
                          \ifmmode
                             \makebox[0in][l]{\qquad\fbox{\rm#1}}
                          \else
                             \marginpar{\vspace{0.7\baselineskip}
                                        \hspace{-1.1\textwidth}
                                        \fbox{\rm#1}}
                          \fi
                       \fi
                       \label{#1} }

\newcommand{\be}{\begin{equation}}
\newcommand{\ee}{\end{equation}}

\newcommand{\abs}[1]{|#1|}
\newcommand{\norm}[1]{\|#1\|}

\newcommand{\R}{\mathbb{R}}
\newcommand{\N}{\mathbb{N}}

\def \calf {{  {\mathcal{F}} }}

\def \cala {{  {\mathcal{A}}  }}
\def \calb {{  {\mathcal{B}}  }}
\def \cald {{  {\mathcal{D}} }}

 \def \o {{  \mathcal{O}  }}

 \def  \ltwo { {L^2 ({\R^n}) } }
  \def  \lq { { L^q  ({\R^n}) } }
   \def \ii {\int_{{\R^n}}}

  \def \eps{ { \varepsilon }}

\begin{document}

\begin{titlepage}
\title{\Large\bf   
  Pullback Attractors of Non-autonomous Stochastic
  Degenerate Parabolic Equations on Unbounded Domains }
\vspace{7mm}

\author{
 Andrew Krause \quad  and \quad 
 Bixiang Wang  
\vspace{1mm}\\
Department of Mathematics, New Mexico Institute of Mining and
Technology \vspace{1mm}\\ Socorro,  NM~87801, USA \vspace{1mm}\\
Email: akrause@nmt.edu,  \quad \quad    bwang@nmt.edu}

\date{}
\end{titlepage}

\maketitle

\medskip

\medskip

\begin{abstract}
This paper is concerned with
pullback attractors of  
  the   stochastic
  $p$-Laplace  equation 
  defined on the entire space $\R^n$.
  We first establish the asymptotic compactness
  of the   equation in $\ltwo$ and  then prove
  the existence and uniqueness of  non-autonomous
  random attractors.
  This attractor is pathwise periodic if 
  the non-autonomous deterministic
  forcing is time periodic.
  The  difficulty of non-compactness of Sobolev embeddings on $\R^n$
  is overcome by the uniform smallness of  solutions outside a bounded domain.
\end{abstract}

{\bf Key words.}        Pullback  attractor;   
    random attractor;    periodic  attractor;   $p$-Laplace equation.

 {\bf MSC 2010.} Primary 35B40. Secondary 35B41, 37L30.

\baselineskip=1.35\baselineskip

\section{Introduction}
\setcounter{equation}{0}

In this paper,  we study  random attractors
of  the non-autonomous stochastic 
$p$-Laplace equation defined on $\R^n$.
Suppose 
$(\Omega, \calf, P,  \{\theta_t\}_{t \in \R})$
 is a metric dynamical system where 
 $(\Omega, \calf, P)$ is a probability space
 and  $\{\theta_t\}_{t \in \R}$ is a measure-preserving
 transformation group on $\Omega$.
  Given $\tau \in\R$ and $\omega \in \Omega$, 
consider the   stochastic     equation 
defined for   $ x \in {\R^n}$ and   $t > \tau$, 
\be
  \label{intr1}
  {\frac {\partial u}{\partial t}}  + \lambda u
  - {\rm div} \left (|\nabla u |^{p-2} \nabla u \right )
  =f(t,x,u ) 
  +g(t,x) 
  +\alpha \eta (\theta_t \omega) u   +\eps  h(x) {\frac {dW}{dt}},
  \ee 
  where $p\ge 2$,  $ \alpha>0  $,  $\lambda>0$,      $\eps>0 $,
  $f$ is a time dependent  nonlinearity, $g$  and $h$  are
  given functions,  
$\eta$ is a   random variable   and
$W$  is a    Wiener process on  $(\Omega, \calf, P)$.
The $p$-Laplace equation has been used
to model a variety of physical phenomena.
For instance,  in fluid dynamics,
the motion of    non-Newtonian fluid
is governed by the $p$-Laplace equation with $p \neq 2$.
This equation  is  also used to study  flow in  porous media 
and nonlinear elasticity (see \cite{lion1} for more details).

   In this paper, we will investigate the asymptotic behavior
   of solutions of the $p$-Laplace equation \eqref{intr1}
   driven by deterministic as well as stochastic
   forcing.
   If $f$  and $g$ do not depend on time,  then we
   call   \eqref{intr1}  an autonomous stochastic equation.
   In the autonomous  case,   the existence of   random attractors
   of \eqref{intr1}    has been  established recently
   in   \cite{gess1, gess2, gess3}
   by variational methods  under the condition that
    the growth rate
   of the nonlinearity $f$ is  not  bigger than $p$.
   This result   has been extended in \cite{wan8}
    to the case where    $f$ is non-autonomous and has a
    polynomial growth of any order. 
    Note that in all  papers mentioned above, 
    the  $p$-Laplace equation is defined in a bounded domain
    where compactness of Sobolev embeddings is available.
    As far as we are aware, there is no  existence result on random
    attractors for   the stochastic $p$-Laplace equation defined on unbounded
    domains.  The goal of this paper is to
    overcome the non-compactness of Sobolev embeddings on $\R^n$
    and prove the existence and uniqueness of random
    attractors for \eqref{intr1} in $\ltwo$. 
    More precisely,   we will 
    show by a cut-off technique   that the tails of  solutions of \eqref{intr1} are uniformly
    small outside a bounded domain for large times.
    We then use this fact and 
      the compactness of solutions in bounded domains
      to establish the asymptotic compactness of solutions
      in $\ltwo$.
      By the asymptotic compactness and   absorbing sets of the equation,
      we  can obtain the existence and uniqueness  of random attractors.
      This  random  attractor    is   pathwise periodic if $f(t,x,u)$ and $g(t,x)$ are 
      periodic in $t$.

 It is worth  mentioning   that
 the definition of random attractor
 for autonomous stochastic systems
 was initially introduced in \cite{cra2, fla1,  schm1}.
 Since then,   
 such  attractors for autonomous stochastic PDEs
 have  been studied in 
  \cite{beyn1,  car1, car2, 
   car3, car4, car5, chu2, chu3, cra1, cra2, 
    fla1, gar1, gar2, gar3,  gess1, gess2, gess3,
      huang1, kloe1, schm1, shen1}  
       in bounded domains,
       and in     \cite{bat2, 
    wan1,  wan2}
         in  unbounded domains.
       For non-autonomous
       stochastic PDEs, the reader is referred to
         \cite{adi1, bat3,  car6, dua3, gess2, gess3,  kloe2,  wan5,
      wan7,  wan8}
     for   existence of  random   attractors.
      
      The   rest of this article  consists of three sections.
    In the next section,  we  prove  the well-posedness
      of  \eqref{intr1} in $\ltwo$ under certain conditions.   
       Section 3 is devoted to uniform estimates of solutions
       for large times which include  the  estimates on the tails of solutions
       outside a bounded domain. 
       In the last section,  we derive the existence and  uniqueness
       of    random attractors for 
       the non-autonomous stochastic equation \eqref{intr1}.

        The following notation will be used throughout this paper:
$\| \cdot \|$  for the norm  of $\ltwo$ and 
  $(\cdot, \cdot)$      for  its 
   inner product.
The norm of $L^p(\R^n)$ is usually  written  as $\| \cdot \|_p$
and the 
  norm  of a   Banach space  $X$
 is written as    $\|\cdot\|_{X}$.
 The symbol   $c$ or  $c_i$ ($i=1, 2, \ldots$) is used
 for a general positive 
      number  which may change from line to line.
 
 Finally, we recall the following inequality which will be used
 frequently in the sequel: 
\begin{equation}
\label{p_inequality}
\norm{u}^p_p \leq \frac{q-p}{q-2}\norm{u}^2 + \frac{p-2}{q-2}\norm{u}^q_q,
\ee
 where   $2 < p < q$  and  $u \in 
\ltwo \bigcap \lq$.

\section{Cocycles  Associated with Degenerate Equations} 
\setcounter{equation}{0}

In this section, we  first establish the well-posedness of equation
\eqref{intr1} in $\ltwo$, and then define a continuous cocycle
for the   stochastic equation. This step is necessary for us
to investigate the asymptotic behavior of solutions.
 
Let   $(\Omega, \calf, P)$ be the standard probability space
where 
$ 
\Omega = \{ \omega   \in C(\R, \R ):  \omega(0) =  0 \}
$,   $\calf$   is 
 the Borel $\sigma$-algebra induced by the
compact-open topology of $\Omega$  and $P$
is   the   Wiener
measure on $(\Omega, \calf)$.   
Denote by  $\{\theta_t\}_{t \in \R}$ 
 the  family of shift operators given by 
$$
 \theta_t \omega (\cdot) = \omega (\cdot +t) - \omega (t)
\quad  \mbox{ for  all  } \   \omega \in \Omega \ 
\mbox{ and } \  t \in \R.
$$
From \cite{arn1} we know that 
 $(\Omega, \calf, P,  \{\theta_t\}_{t \in \R})$
 is a metric dynamical system.
    Given $\tau \in\R$ and $\omega \in \Omega$, 
consider the  following  stochastic     equation 
defined for   $ x \in {\R^n}$ and   $t > \tau$, 
\be
  \label{seq1}
  {\frac {\partial u}{\partial t}}  + \lambda u
  - {\rm div} \left (|\nabla u |^{p-2} \nabla u \right )
  =f(t,x,u ) 
  +g(t,x) 
  +\alpha \eta (\theta_t \omega) u   +\eps  h(x) {\frac {dW}{dt}}
  \ee 
  with  initial condition
 \be\label{seq2}
 u( \tau, x ) = u_\tau (x),   \quad x\in  {\R^n},
 \ee
 where $p\ge 2$,  $ \alpha>0  $,  $\lambda>0$,      $\eps>0 $,
$g \in L^2_{loc}(\R, \ltwo)$,  $h \in H^2({\R^n})$, 
$\eta$ is an integrable tempered  random variable   and
$W$  is a  two-sided real-valued Wiener process on  $(\Omega, \calf, P)$.
We  assume the nonlinearity 
$f: \R \times {\R^n} \times \R$ 
$\to \R$ is continuous and satisfies, 
for all
$t, s \in \R$   and  $x \in {\R^n}$, 
\be 
\label{f1}
f (t, x, s) s \le - \gamma
 |s|^q + \psi_1(t, x),  
\ee
\be 
\label{f2}
|f(t, x, s) |   \le \psi_2 (t,x)  |s|^{q-1} + \psi_3 (t, x),
\ee
\be 
\label{f3}
{\frac {\partial f}{\partial s}} (t, x, s)   \le \psi_4 (t,x),
\ee
where $\gamma>0$  and $ q \ge p $
are constants,
$\psi_1 \in L^1_{loc} (\R,  L^1( {\R^n}) )$,
$\psi_2, \psi_4 \in L^\infty_{loc} (\R, L^\infty ( {\R^n}))$,
and $\psi_3 \in L^{q_1}_{loc} (\R, L^{q_1} (  {\R^n}))$.
From now on,   we  always  
assume  $h \in H^2({\R^n}) \bigcap  W^{2,q} ({\R^n})$
and   use $p_1$ and $q_1$
to denote 
  the conjugate exponents
of $p$ and $q$, respectively. 
Since  $h \in H^2({\R^n}) \bigcap  W^{2,q} ({\R^n})$
and $q \ge p$, by \eqref{p_inequality} we find
$h \in   W^{2,p} ({\R^n})$.

To define a random dynamical system for
  \eqref{seq1},  we need to transfer the stochastic equation
  to a pathwise  deterministic  system.
  As usual,  let $z$ be the 
  random variable  given by:
$$
z ( \omega)=   - \lambda  \int^0_{-\infty} e^{\lambda  \tau}    \omega  (\tau) d \tau,
\quad \omega \in \Omega.
$$
It  follows from \cite{arn1} that
 there exists a $\theta_t$-invariant 
 set $\widetilde{\Omega}  $  
 of full
 measure 
 such that  
  $z(\theta_t \omega)$  is
 continuous in $t$ 
and $ \lim\limits_{t \to \pm \infty}   {\frac { |z(\theta_t \omega)|}{|t|}}
= 0 $
for all  $\omega \in \widetilde{\Omega}$.
We also  assume    $\eta(\theta_t \omega)$  is
pathwise 
 continuous  for each fixed $\omega \in  \widetilde{\Omega}$.
For convenience,  we will  denote  
$\widetilde{\Omega}$    by   $\Omega$
in the sequel. 
  Let $u(t, \tau, \omega, u_\tau)$
   be a solution of problem \eqref{seq1}-\eqref{seq2}
   with initial condition
   $u_\tau$ at initial time $\tau$,  and  define
 \be
 \label{uv}
 v(t, \tau, \omega, v_\tau)
   =   u(t, \tau, \omega, u_\tau)
    - \eps  h(x) z(\theta_{t} \omega)
 \quad \mbox{with }   \
 v_\tau =   u_\tau - \eps  h  z(\theta_\tau \omega).
 \ee
 By \eqref{seq1} and \eqref{uv}, 
 after simple calculations,
  we get
 \be 
 \label{veq1}
\frac{\partial v}{\partial t}-\text{div}
\left (
\abs{\nabla
 (v+\eps h(x)z(\theta_{t} \omega))}^{p-2}
 \nabla (v+\eps h(x)z(\theta_{t} \omega))
 \right )
 + \lambda v 
 $$
 $$
 = f(t,x,v+\eps h(x)z(\theta_{t} \omega))
+ g(t,x) + \alpha\eta(\theta_t\omega)v
+ \alpha \eps  \eta(\theta_t\omega)     z(\theta_t\omega)  h,
\ee
with initial   condition
\be 
 \label{veq2}
v(\tau, x )=v_{\tau}(x ), \quad  x\in {\R^n}.
\ee

In what follows,  we first prove the well-posedness
of problem  \eqref{veq1}-\eqref{veq2}
in $\ltwo$, and then define
a cocycle   for \eqref{seq1}-\eqref{seq2}.

 \begin{defn}
 \label{defv}
 Given $\tau \in \R$, $\omega \in \Omega$
 and
 $v_\tau \in \ltwo$, 
 let  $v(\cdot, \tau, \omega, v_\tau)$: $[\tau, \infty) \to \ltwo$
 be a continuous function with
  $v
 \in L^p_{loc}([\tau, \infty), W^{1,p}({\R^n}) ) \bigcap
 L^q_{loc} ([\tau, \infty), \lq)
 $  and 
 ${\frac {dv}{dt}}
 \in L^{p_1}_{loc}([\tau, \infty),  (W^{1,p}) ^*) 
 + 
 L^{2}_{loc} ( [\tau, \infty),  \ltwo )
 + 
 L^{q_1}_{loc} ( [\tau, \infty), L^{q_1} ({\R^n} ) )$.
 We say $v$ is a solution of   \eqref{veq1}-\eqref{veq2}
 if  $ v(\tau, \tau, \omega, v_\tau) =v_\tau$  
   and     for every
   $\xi \in   W^{1,p}({\R^n}) \bigcap \ltwo  \bigcap  \lq $,
 $$
 {\frac {d}{dt}}   (v, \xi)
 +\int_{\R^n} 
 \abs{\nabla
 (v+\eps h z(\theta_{t} \omega))}^{p-2}
 \nabla (v+\eps h z(\theta_{t} \omega) )
\cdot   \nabla \xi dx 
+ \left (\lambda - \alpha\eta(\theta_t\omega) \right )
 (v, \xi)
 $$
 $$
 =
 \int_{\R^n} f(t,x,v+\eps  h z(\theta_{t} \omega)) \xi dx
+ (g(t, \cdot), \xi) 
+ \alpha \eps  \eta(\theta_t\omega)     z(\theta_t\omega)  
(h, \xi)
   $$
 in the sense of distribution on $[\tau, \infty)$.
 \end{defn}
 
 Next, we prove the existence and uniqueness of solutions
 of \eqref{veq1}-\eqref{veq2} in $\ltwo$. To this end,
 we  set $\o_k = \{ x \in \R^n:  |x| < k \}$ for each $k \in \N$
 and consider the following equation defined in $\o_k$:
 \be 
 \label{veqk1}
\frac{\partial v_k}{\partial t}-\text{div}
\left (
\abs{\nabla
 (v_k+\eps h(x)z(\theta_{t} \omega))}^{p-2}
 \nabla (v_k+\eps h(x)z(\theta_{t} \omega))
 \right )
 + \lambda v_k 
 $$
 $$
 = f(t,x,v_k+\eps h(x)z(\theta_{t} \omega))
+ g(t,x) + \alpha\eta(\theta_t\omega)v_k
+ \alpha \eps  \eta(\theta_t\omega)     z(\theta_t\omega)  h,
\ee
with boundary condition
\be\label{veqk2}
v_k (t, x) = 0 \quad   \text{for all } t>\tau \text{ and }  |x| =k
\ee
and  initial   condition
\be 
 \label{veqk3}
v(\tau, x )=v_{\tau}(x )  \quad  \text{ for all }  x\in \o_k.
\ee
 By the arguments in \cite{wan8},  one can show that
 if  \eqref{f1}-\eqref{f3} are fulfilled,  then
 for every $\tau \in \R$
 and   $\omega \in \Omega$,   system  \eqref{veqk1}-\eqref{veqk3}
  has  a unique  solution
 $v_k(\cdot, \tau, \omega, v_\tau)  $
 in the sense of Definition \ref{defv} with $\R^n$  replaced by
 $\o_k$.   Moreover, 
 $v_k(t, \tau, \omega, v_\tau)$  is  $(\calf, \calb (L^2(\o_k)))$-measurable
 with respect to $\omega \in \Omega$. 
 We now investigate 
  the limiting behavior of $v_k$
 as $k \to \infty$.
  For convenience,  we   write  $V_k
  =W_0^{1,p} (\o_k)$    and $V= W^{1,p} (\R^n)$.
  Let   
 $A$: $ V_k  \to V_k^*  $ be  the operator  given by
\be\label{Av}
 (A(v_1),  v_2 )_{(V_k^*, V_k)}
 = \int_{\o_k} | \nabla v_1|^{p-2} \nabla v_1 \cdot \nabla v_2 dx,
 \quad  \mbox{for all} \ v_1, v_2 \in V_k,
\ee
 where $ (\cdot,   \cdot )_{(V_k^*, V_k)}$ is the duality pairing 
 of $V_k^*$  and $V_k$.
 Note that $A$  is a monotone operator
 as in \cite{show1} and  $A: V \to V^*$  is also well defined by
 replacing $\o_k$ by $\R^n$ in \eqref{Av}.
 The following uniform estimates on $v_k$  are useful.
  
 \begin{lem}
 \label{lemvk}
 Suppose \eqref{f1}-\eqref{f3} hold. Then for every  $T>0$, 
 $\tau \in \R$, $\omega \in \Omega$  and $v_\tau \in \ltwo$,
 the solution $v_k(t, \tau, \omega, v_\tau)$ of system
 \eqref{veqk1}-\eqref{veqk3} has the properties:
$$
\{v_k\}_{k=1}^\infty
\ \mbox{ is bounded in } \
 L^\infty (\tau, \tau +T;  L^2(\o_k) ) \bigcap L^q(\tau, \tau +T;  L^q(\o_k) )
 \bigcap L^p(\tau, \tau +T; V_k),
$$
$$
 \{A( v_k+\eps h z(\theta_{t} \omega) )\}_{k=1}^\infty
\ \mbox{ is bounded in } \
  L^{p_1}(\tau, \tau +T; V_k^*)  \  \text{ with }  \ {\frac 1{p_1}} + {\frac 1p} =1,
  $$
  $$
  \{   f(t,x,v_k+\eps h z(\theta_{t} \omega))  \}_{k=1}^\infty
 \  \mbox{ is bounded in }  \
 L^{q_1} (\tau, \tau +T; L^{q_1} ({\o_k})),
 \quad  {\frac 1{q_1}} + {\frac 1q} =1,
$$
and
$$
  \left \{ {\frac {dv_k}{dt}}  \right \} 
  \mbox{ is bounded in }   
  L^{p_1}(\tau, \tau +T; V_k^*)   +  
    L^{2} (\tau, \tau +T; L^{2} ({\o_k}))
     +  
 L^{q_1} (\tau, \tau +T; L^{q_1} ({\o_k})).
 $$ 
 \end{lem}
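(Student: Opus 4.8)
The plan is to obtain a single differential inequality for $\norm{v_k(t)}^2$ by testing \eqref{veqk1} with $v_k$ in $L^2(\o_k)$, to close it by Gronwall's inequality on the compact interval $[\tau,\tau+T]$, and then to read off the first bound from that inequality and the remaining three bounds either from its integrated form or directly from the equation. The recurring point to watch is that \emph{every} constant that appears must be independent of $k$; this is guaranteed because $\norm{v_\tau}_{L^2(\o_k)}\le\norm{v_\tau}$, because $h\in H^2(\R^n)\cap W^{2,q}(\R^n)\subset W^{1,p}(\R^n)\cap\lq$ so that $\norm{h}_{W^{1,p}(\o_k)}$ and $\norm{h}_{L^q(\o_k)}$ are dominated by the corresponding norms over $\R^n$, and because $z(\theta_t\omega)$, $\eta(\theta_t\omega)$, $\norm{g(t,\cdot)}$, $\norm{\psi_1(t,\cdot)}_{L^1}$, $\norm{\psi_2(t,\cdot)}_\infty$ and $\norm{\psi_3(t,\cdot)}_{q_1}$ are all bounded on $[\tau,\tau+T]$ independently of $k$. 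The regularity built into Definition \ref{defv} makes the identity $\tfrac{d}{dt}\norm{v_k}^2=2(\tfrac{dv_k}{dt},v_k)$, and hence the test procedure, legitimate.

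First I would carry out the energy estimate. In the $p$-Laplace term I write $\nabla v_k=\nabla(v_k+\eps h z(\theta_t\omega))-\eps z(\theta_t\omega)\nabla h$, so that it produces $\norm{\nabla(v_k+\eps h z(\theta_t\omega))}_p^p$ minus an error which, by H\"older and Young, is at most $\tfrac12\norm{\nabla(v_k+\eps h z(\theta_t\omega))}_p^p+c\,|z(\theta_t\omega)|^p\norm{\nabla h}_p^p$. In the nonlinear term I use the same splitting $v_k=(v_k+\eps h z(\theta_t\omega))-\eps h z(\theta_t\omega)$: by \eqref{f1} the first piece contributes at most $-\gamma\norm{v_k+\eps h z(\theta_t\omega)}_q^q+\norm{\psi_1(t,\cdot)}_{L^1}$, while by \eqref{f2} together with H\"older, Young and the identity $(q-1)q_1=q$ the second piece is at most $\tfrac\gamma2\norm{v_k+\eps h z(\theta_t\omega)}_q^q$ plus terms depending only on $z(\theta_t\omega)$, $\psi_2$, $\psi_3$ and $h$. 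The linear terms $\lambda v_k$ and $\alpha\eta(\theta_t\omega)v_k$ combine into $(\alpha\eta(\theta_t\omega)-\lambda)\norm{v_k}^2\le c(\omega,T)\norm{v_k}^2$, and $|(g,v_k)|+\alpha\eps|\eta(\theta_t\omega)z(\theta_t\omega)|\,|(h,v_k)|\le\norm{v_k}^2+c\norm{g(t,\cdot)}^2+c(\omega,T)\norm{h}^2$. Collecting everything yields
\[
\frac{d}{dt}\norm{v_k}^2+\norm{\nabla(v_k+\eps h z(\theta_t\omega))}_p^p+\gamma\norm{v_k+\eps h z(\theta_t\omega)}_q^q\le c_1(\omega,T)\norm{v_k}^2+\rho(t),
\]
where $\rho\in L^1(\tau,\tau+T)$, $\int_\tau^{\tau+T}\rho\,dt\le c_2(\omega,T)$, and $c_1,c_2$ are independent of $k$.

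Gronwall's inequality on $[\tau,\tau+T]$ then gives $\norm{v_k(t)}^2\le e^{c_1(\omega,T)T}\bigl(\norm{v_\tau}^2+c_2(\omega,T)\bigr)$ for all $t\in[\tau,\tau+T]$, which is the first assertion. Integrating the displayed inequality over $[\tau,\tau+T]$ and inserting this bound controls $\int_\tau^{\tau+T}\bigl(\norm{\nabla(v_k+\eps h z(\theta_t\omega))}_p^p+\norm{v_k+\eps h z(\theta_t\omega)}_q^q\bigr)dt$ uniformly in $k$. Since $\int_\tau^{\tau+T}\bigl(|z(\theta_t\omega)|^p\norm{h}^p_{W^{1,p}}+|z(\theta_t\omega)|^q\norm{h}_q^q\bigr)dt<\infty$, subtracting $\eps h z(\theta_t\omega)$ gives the bound for $\{v_k\}$ in $L^q(\tau,\tau+T;L^q(\o_k))$; combining this with the $L^\infty(L^2)$ bound and using \eqref{p_inequality} to dominate $\norm{v_k}_p$ by $\norm{v_k}$ and $\norm{v_k}_q$ --- which avoids invoking the Poincar\'e constant of $\o_k$, whose $k$-dependence would be fatal --- together with $\norm{\nabla v_k}_p\le\norm{\nabla(v_k+\eps h z(\theta_t\omega))}_p+\eps|z(\theta_t\omega)|\,\norm{\nabla h}_p$, gives the bound in $L^p(\tau,\tau+T;V_k)$.

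The remaining three assertions need no new estimates. From \eqref{Av} and H\"older, $\norm{A(v_k+\eps h z(\theta_t\omega))}_{V_k^*}\le\norm{\nabla(v_k+\eps h z(\theta_t\omega))}_p^{p-1}$, and since $(p-1)p_1=p$ the integral $\int_\tau^{\tau+T}\norm{A(v_k+\eps h z(\theta_t\omega))}_{V_k^*}^{p_1}dt$ coincides with the already bounded $\int_\tau^{\tau+T}\norm{\nabla(v_k+\eps h z(\theta_t\omega))}_p^p\,dt$. From \eqref{f2} and $(q-1)q_1=q$, $|f(t,x,v_k+\eps h z(\theta_t\omega))|^{q_1}\le c\,\psi_2(t,x)^{q_1}|v_k+\eps h z(\theta_t\omega)|^q+c\,\psi_3(t,x)^{q_1}$, and integrating --- using $\psi_2\in L^\infty_{loc}(\R,L^\infty)$, $\psi_3\in L^{q_1}_{loc}(\R,L^{q_1})$ and the $L^q$ bound just obtained --- gives the bound on $\{f(t,x,v_k+\eps h z(\theta_t\omega))\}$. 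Finally, rearranging \eqref{veqk1} expresses $\tfrac{dv_k}{dt}$ as $-A(v_k+\eps h z(\theta_t\omega))$ (bounded in $L^{p_1}(\tau,\tau+T;V_k^*)$) plus $-\lambda v_k+\alpha\eta(\theta_t\omega)v_k+g+\alpha\eps\eta(\theta_t\omega)z(\theta_t\omega)h$ (bounded in $L^2(\tau,\tau+T;L^2(\o_k))$) plus $f(t,x,v_k+\eps h z(\theta_t\omega))$ (bounded in $L^{q_1}(\tau,\tau+T;L^{q_1}(\o_k))$), which is the last assertion. The only genuinely delicate aspect of the argument is this uniformity in $k$, and in particular the use of \eqref{p_inequality} in place of a Poincar\'e inequality on $\o_k$.
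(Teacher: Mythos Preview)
Your proof is correct and follows essentially the same route as the paper: test \eqref{veqk1} with $v_k$, split the $p$-Laplace and nonlinear terms via $v_k=(v_k+\eps h z)-\eps h z$ together with \eqref{f1}--\eqref{f2}, close the resulting differential inequality, and then read off the bounds on $A(\cdot)$, $f(\cdot)$ and $\tfrac{dv_k}{dt}$ from \eqref{Av}, \eqref{f2} and the equation. The only cosmetic difference is that the paper uses the integrating factor $e^{\frac{7}{4}\lambda t-2\alpha\int_0^t\eta(\theta_r\omega)\,dr}$ where you invoke Gronwall directly, and you are more explicit than the paper about recovering the $V_k$-bound via \eqref{p_inequality} rather than a $k$-dependent Poincar\'e inequality.
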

  
  \begin{proof}
  By \eqref{veqk1}  we get
 \be
 \label{plvk1_1}
\frac{1}{2}\frac{d}{dt}\norm{v_k}^2 
+  \int_{\o_k} \abs{\nabla (v_k
+\eps h z(\theta_{t} \omega))}^{p-2}
\nabla (v_k+\eps h z(\theta_{t} \omega)  )
 \cdot  \nabla v_k dx
+ \lambda \norm{v_k}^2
$$
$$
= \int_{\o_k}
 f(t,x,v_k+\eps h z(\theta_{t} \omega) ) v_k dx
 + (g(t), v_k)
 + \alpha\eta(\theta_t\omega) \norm{v_k}^2
 +  \alpha\eps \eta(\theta_t\omega)  z(\theta_t\omega ) (h, v_k)   .
\ee
For the second  term on the left-hand side of \eqref{plvk1_1},  
by Young\rq{}s inequality we  obtain
$$ \int_{\o_k} \abs{\nabla (v_k
+\eps h z(\theta_{t} \omega))}^{p-2}
\nabla (v_k+\eps h z(\theta_{t} \omega)  ) \cdot  \nabla v_k dx
$$
$$
  = \int_{\o_k}\abs{\nabla (v_k +\eps h z(\theta_{t} \omega))}^{p}dx 
  - \int_{\o_k} \abs{\nabla (v_k+\eps h z(\theta_{t} \omega))}^{p-2}
  \nabla(v_k +\eps h z(\theta_{t}\omega) ) 
  \cdot \nabla  ( \eps h z(\theta_{t} \omega ) ) dx 
  $$
  \be\label{plvk1_2}
  \ge
  {\frac 12}  \int_{\o_k}\abs{\nabla (v_k +\eps h z(\theta_{t} \omega))}^{p}dx 
   -  c_1 \abs{\eps z(\theta_{t} \omega) }^p \| \nabla h \|^p_p.
  \ee
  For the first   term on the right-hand side of \eqref{plvk1_1},
 by  \eqref{f1} and \eqref{f2} we get
   \be
    \label{plvk1_3}
\int_{\o_k}
f(t,x,v_k+\eps h  z(\theta_{t} \omega)) v_k dx 
$$
$$
=  \int_{\o_k}f(t,x,v_k+\eps h z(\theta_{t} \omega))
(v_k +\eps h z(\theta_{t} \omega))dx 
- \eps z(\theta_{t} \omega) \int_{\o_k}
f(t,x,v_k +\eps h z(\theta_{t} \omega))h(x)dx
$$
$$
\leq -\gamma\int_{\o_k}
\abs{v_k+\eps h z(\theta_{t} \omega))}^q dx 
+ \int_{\o_k}\psi_1(t,x)dx
$$
$$
+
\int_{\o_k}\psi_2(t,x)
\abs{v_k+\eps h z(\theta_{t} \omega))}^{q-1}
\abs{\eps h z(\theta_{t} \omega)}dx 
+ \int_{\o_k}\psi_3(t,x) \abs{\eps h z(\theta_{t} \omega)}dx
$$
$$
\leq -\frac{\gamma}{2}\norm{v_k+\eps h z(\theta_{t} \omega))}^q_q 
+ \norm{\psi_1(t)}_{1}
+\norm{\psi_3(t)}_{q_1}^{q_1} 
+ c_2 \int_{\o_k}\abs{\eps h z(\theta_{t} \omega)}^q dx.
\ee
 By Young\rq{}s inequality   we obtain 
\be \label{plvk1_4}
\int_{\o_k} g(t,x) v_k dx  + \alpha \eps \eta(\theta_t\omega)z(\theta_t\omega)
\int_{\o_k} h(x) v_k dx  
$$
$$
\leq  
  \frac{4}{\lambda} \abs{ \alpha \eps \eta(\theta_t\omega)
z(\theta_t\omega)}^2 \norm{h} ^2
 + \frac{4}{\lambda}  \norm{g (t)}^2  + \frac{\lambda}{8}\norm{v_k}^2.
\ee
It follows   from \eqref{plvk1_1}-\eqref{plvk1_4}  that
  \be
  \label{plvk1_6}
\frac{d}{dt}\norm{v_k}^2 
+ {\frac{7}{4}}  \lambda \norm{v_k}^2 
+ \int_{\o_k} \abs{\nabla (v_k+\eps hz(\theta_{t} \omega))}^p_p  dx
+ \gamma
\int_{\o_k} \abs{v_k+\eps h z(\theta_{t} \omega))}^q_q dx
$$
$$
\leq 2\alpha\eta(\theta_t\omega)\norm{v_k}^2 
+ c_3 \left (  \abs{\eps z(\theta_{t} \omega)}^p 
+   \abs{\eps z(\theta_{t} \omega)}^q 
+     \abs{\alpha \eps \eta(\theta_t\omega)z(\theta_t\omega) }^2
\right )
$$
$$
 + c_4 \left (
 \norm{g(t)}^2\
 + \norm{\psi_1(t)}_{1}+   \norm{\psi_3(t)}_{q_1}^{q_1}
 \right ) .
\ee
 Multiplying \eqref{plvk1_6}
  by $e^{\frac{7}{4}\lambda t-2\alpha\int^{t}_{0}
  \eta(\theta_{r}\omega)dr}$, 
  and then  integrating from $\tau $ to $t$, we get 
  \be 
  \label{plvk1_8}
\norm{v_k(t, \tau,\omega,v_{\tau})}^2 
+ \int_{\tau}^{t}e^{\frac{7}{4}\lambda(s-t)
-2\alpha\int_{t}^{s}\eta(\theta_{r}\omega)dr}
\int_{\o_k} \abs{\nabla (v_k(s, \tau,\omega,v_{\tau})
+\eps h  z(\theta_{s} \omega))}^p_p dxds
$$
$$
+ \gamma\int_{\tau}^{t}e^{\frac{7}{4}\lambda(s-t)
-2\alpha\int_{t}^{s}\eta(\theta_{r}\omega)d r}
\int_{\o_k}\abs{v_k(s, \tau,\omega,v_{\tau})
+\eps h  z(\theta_{s} \omega))}^q_q dxds
$$
$$
\le  c_3  \int_{\tau}^{t}e^{\frac{7}{4}\lambda(s-t)-2\alpha\int_{t}^{s}
\eta(\theta_{r}\omega)d r}
\left (
\abs{\eps z(\theta_{s} \omega)}^p 
+  \abs{\eps z(\theta_{s} \omega)}^q 
+  \abs{\alpha \eps \eta(\theta_{s}\omega)z(\theta_s\omega)}^2
\right ) ds
$$
$$
+ c_4 \int_{\tau}^{t}e^{\frac{7}{4}
\lambda(s-t)-2\alpha\int_{t}^{s}\eta(\theta_{r}\omega)d r}
\left (
\norm{g(s)}^2
 + \norm{\psi_1(s)}_{1}+\norm{\psi_3(s)}_{q_1}^{q_1} 
\right )ds 
$$
$$
+ e^{\frac{7}{4}\lambda(\tau-t)-2\alpha\int_{t}^{\tau}
\eta(\theta_{r}\omega)d r}\norm{v_{\tau}}^2 .
\ee
By \eqref{plvk1_8} we get
\be\label{plvk1_10}
\{v_k\} 
\ \mbox{ is bounded in } \
 L^\infty (\tau, \tau +T;  L^2(\o_k) ) \bigcap L^q(\tau, \tau +T;  L^q(\o_k) )
 \bigcap L^p(\tau, \tau +T; V_k).
\ee
By \eqref{f2} and \eqref{plvk1_10} we obtain
\be\label{plvk1_11}
  \{   f(t,x,v_k+\eps h z(\theta_{t} \omega))  \}_{k=1}^\infty
 \  \mbox{ is bounded in }  \
 L^{q_1} (\tau, \tau +T; L^{q_1} ({\o_k}) ).
\ee
By \eqref{Av}  and \eqref{plvk1_10} we get
\be\label{plvk1_12}
 \{A( v_k+\eps h z(\theta_{t} \omega)   ) \}_{k=1}^\infty
\ \mbox{ is bounded in } \
  L^{p_1}(\tau, \tau +T; V_k^*) .
  \ee
   By \eqref{plvk1_10}-\eqref{plvk1_12}  it follows from 
   \eqref{veqk1} that 
$$
  \left \{ {\frac {dv_k}{dt}}  \right \} 
  \mbox{ is bounded in }   
  L^{p_1}(\tau, \tau +T; V_k^*)   +  
    L^{2} (\tau, \tau +T; L^{2} ({\o_k}))
     +  
 L^{q_1} (\tau, \tau +T; L^{q_1} ({\o_k})),
$$
which completes   the proof.
  \end{proof}

 The next lemma is concerned with the well-posedness
 of \eqref{veq1}-\eqref{veq2} in $\ltwo$.

 \begin{lem}
 \label{exiv}
 Suppose \eqref{f1}-\eqref{f3} hold. Then for every 
 $\tau \in \R$, $\omega \in \Omega$  and $v_\tau \in \ltwo$,
 problem \eqref{veq1}-\eqref{veq2} has a unique solution
 $v(t, \tau, \omega, v_\tau)$ in the sense of Definition \ref{defv}.
 In addition,  $v(t, \tau, \omega, v_\tau)$
 is  ($\calf, \calb (\ltwo))$-measurable
   in $\omega$ and continuous in
  $v_\tau$ in $\ltwo$   and   satisfies 
 $$
 {\frac d{dt}} \norm{ v(t, \tau, \omega, v_\tau) }^2
 + 2 \left (
  \lambda - \alpha \eta  (\theta_{t} \omega)
 \right ) \norm{ v }^2
 +2   \norm{ \nabla
 (v+\eps h z(\theta_{t} \omega))}^p_p
 $$
 $$
 = 2 \eps  z(\theta_{t} \omega)  \ii 
\abs{\nabla
 (v+\eps h z(\theta_{t} \omega))}^{p-2}
 \nabla (v+\eps h z(\theta_{t} \omega))
\cdot \nabla h dx
$$
\be\label{enlem1}
+2  \ii  f(t,x,v+\eps h z(\theta_{t} \omega)) v dx
+ 2 ( g(t), v)   
+ 2 \alpha \eps  \eta(\theta_t\omega)     z(\theta_t\omega)  (h, v)
\ee
  for almost  all $t \ge \tau$.
  \end{lem}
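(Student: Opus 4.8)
The plan is to obtain $v$ as a limit of the finite-ball approximations $v_k$ furnished by Lemma~\ref{lemvk}, to prove uniqueness and continuous dependence on $v_\tau$ by a Gronwall-type energy estimate, and finally to read off \eqref{enlem1} from the chain rule in the underlying evolution triple. Fix $\tau\in\R$, $\omega\in\Omega$, $v_\tau\in\ltwo$ and $T>0$. Since the zeroth-order coefficient $\lambda-\alpha\eta(\theta_t\omega)$ is not sign-definite, I first substitute $v(t)=e^{\alpha\int_\tau^t\eta(\theta_s\omega)\,ds}\widehat v(t)$, which removes that term and multiplies the principal part by a positive continuous factor (still monotone); I keep writing $v$ for brevity, this reduction being understood whenever weak lower semicontinuity in the lower-order terms is invoked. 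Extend each $v_k(\cdot,\tau,\omega,v_\tau)$ by zero outside $\o_k$ (this preserves the $W^{1,p}$, $L^q$ and $L^2$ bounds of Lemma~\ref{lemvk}) and use Banach--Alaoglu to pass to a subsequence along which, on $(\tau,\tau+T)$: $v_k\rightharpoonup v$ weak-$*$ in $L^\infty(\tau,\tau+T;\ltwo)$ and weakly in $L^p(\tau,\tau+T;V)\cap L^q(\tau,\tau+T;\lq)$; $\tfrac{dv_k}{dt}\rightharpoonup\tfrac{dv}{dt}$ in the sum space of Definition~\ref{defv}; $A(v_k+\eps h z(\theta_t\omega))\rightharpoonup\chi$ in $L^{p_1}(\tau,\tau+T;V^*)$; and $f(t,x,v_k+\eps h z(\theta_t\omega))\rightharpoonup\phi$ in $L^{q_1}(\tau,\tau+T;L^{q_1}(\R^n))$.

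Compactness is needed only locally. For each $m$, $\{v_k\}_{k\ge m}$ is bounded in $L^p(\tau,\tau+T;W^{1,p}(\o_m))$ with time derivatives bounded in a negative-order space over $\o_m$, so multiplying by a cut-off supported in $\o_m$ and applying the Aubin--Lions--Simon lemma yields strong convergence in $L^2(\tau,\tau+T;L^2(\o_{m-1}))$; a diagonal argument over $m$ gives $v_k\to v$ in $L^2_{loc}([\tau,\tau+T]\times\R^n)$ and, along a further subsequence, a.e.\ on $(\tau,\tau+T)\times\R^n$. Since $f$ is continuous in its last argument, this a.e.\ convergence together with the $L^{q_1}$-bound on $\{f(\cdot,\cdot,v_k+\eps h z(\theta_t\omega))\}$ identifies $\phi=f(t,x,v+\eps h z(\theta_t\omega))$ (weak limit equals a.e.\ limit). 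Passing to the limit in the weak form of \eqref{veqk1}--\eqref{veqk3} over test functions $\xi\in V\cap\ltwo\cap\lq$ then shows that $v$ satisfies the identity of Definition~\ref{defv} with the $p$-Laplacian term replaced by $(\chi,\xi)_{(V^*,V)}$, and (using the regularity of $\tfrac{dv}{dt}$ and the chain rule below) that $v\in C([\tau,\tau+T];\ltwo)$ with $v(\tau)=v_\tau$. The remaining point---and the one I expect to be the main obstacle---is to identify $\chi$ with $A(v+\eps h z(\theta_t\omega))$, which I carry out by Minty's monotonicity device: testing the approximate equation with $v_k$ and using the chain rule on $\o_k$ gives an energy identity whose limsup, by weak lower semicontinuity of the $\ltwo$- and $\lq$-norms and convergence of all remaining terms (here the a.e.\ convergence for the $f$-term and the structural bounds \eqref{f1}--\eqref{f2} are used), yields $\limsup_k\int_\tau^t\big(A(v_k+\eps h z),v_k\big)\,ds\le\int_\tau^t(\chi,v)_{(V^*,V)}\,ds$; since $A$ is monotone on $V$, taking $\limsup_k$ in $0\le\int_\tau^t\big(A(v_k+\eps h z)-A(w+\eps h z),\,v_k-w\big)\,ds$, then choosing $w=v-\delta\xi$, letting $\delta\downarrow0$, and invoking hemicontinuity of $A$ gives $\chi=A(v+\eps h z(\theta_t\omega))$. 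Thus $v$ solves \eqref{veq1}--\eqref{veq2} on $[\tau,\tau+T]$, and since $T$ is arbitrary the pieces are consistent by uniqueness, producing a solution on $[\tau,\infty)$.

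Uniqueness and continuity in $v_\tau$ come from a single estimate. If $v^1,v^2$ are solutions with data $v_\tau^1,v_\tau^2$, then $w=v^1-v^2$ obeys, after testing with $w$,
$$\tfrac12\tfrac{d}{dt}\norm{w}^2+\big(A(v^1+\eps h z)-A(v^2+\eps h z),\,w\big)+\big(\lambda-\alpha\eta(\theta_t\omega)\big)\norm{w}^2=\ii\big(f(t,x,v^1+\eps h z)-f(t,x,v^2+\eps h z)\big)w\,dx;$$
the operator term is $\ge0$ by monotonicity, and by \eqref{f3} and the mean value theorem the right-hand side is $\le\norm{\psi_4(t)}_\infty\norm{w}^2$, hence $\tfrac{d}{dt}\norm{w}^2\le 2\big(\alpha\abs{\eta(\theta_t\omega)}+\norm{\psi_4(t)}_\infty\big)\norm{w}^2$, and Gronwall's inequality gives $\norm{w(t)}^2\le\norm{w(\tau)}^2\exp\!\big(2\int_\tau^t(\alpha\abs{\eta(\theta_s\omega)}+\norm{\psi_4(s)}_\infty)\,ds\big)$, the exponent being finite by integrability of $\eta$ and local boundedness of $\psi_4$. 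Taking $v_\tau^1=v_\tau^2$ gives uniqueness; in general this is locally Lipschitz dependence on $v_\tau$ in $\ltwo$.

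For measurability: each $v_k(t,\tau,\cdot,v_\tau)$ is $(\calf,\calb(L^2(\o_k)))$-measurable, and since uniqueness is now available the whole sequence (not merely a subsequence) converges to $v(t,\tau,\cdot,v_\tau)$ for every $\omega$, so Borel measurability into the separable space $\ltwo$ passes to the limit. Finally, \eqref{enlem1} is the chain-rule identity for $v$ in the evolution triple $V\cap\ltwo\cap\lq\hookrightarrow\ltwo\hookrightarrow V^*+\ltwo+L^{q_1}(\R^n)$ (see, e.g., \cite{show1}), which makes $t\mapsto\norm{v(t)}^2$ absolutely continuous with $\tfrac{d}{dt}\norm{v}^2=2\langle\tfrac{dv}{dt},v\rangle$; substituting $\xi=v$ in the weak formulation and rewriting $(A(v+\eps h z(\theta_t\omega)),v)=\norm{\nabla(v+\eps h z(\theta_t\omega))}_p^p-\eps z(\theta_t\omega)\ii\abs{\nabla(v+\eps h z(\theta_t\omega))}^{p-2}\nabla(v+\eps h z(\theta_t\omega))\cdot\nabla h\,dx$ produces exactly \eqref{enlem1}.
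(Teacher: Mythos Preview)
Your overall strategy coincides with the paper's: approximate by the ball problems $v_k$, extract weak limits and local strong $L^2$ limits via Aubin--Lions plus a diagonal argument, identify the nonlinear limits (the paper defers this to \cite{wan8}), prove uniqueness by monotonicity of $A$ together with \eqref{f3} and Gronwall, deduce measurability from whole-sequence (weak) convergence, and read off \eqref{enlem1} from the chain rule in the triple $V\cap L^2\cap L^q\hookrightarrow L^2\hookrightarrow V^*+L^2+L^{q_1}$. Two small inaccuracies: your substitution $v=e^{\mu}\hat v$ does not merely ``multiply the principal part by a positive continuous factor''; the transformed principal operator is $\hat v\mapsto e^{-\mu}A(e^{\mu}\hat v+\eps h z)$, which is still monotone and hemicontinuous, so Minty applies, but the description is off. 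And for measurability you should say the whole sequence converges \emph{weakly} (that is all one gets, and it suffices by Pettis in the separable space $\ltwo$).

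There is, however, a genuine gap in your Minty step. You claim $\limsup_k\int_\tau^t(A(w_k),v_k)\,ds\le\int_\tau^t(\chi,v)\,ds$ (with $w_k=v_k+\eps h z$) ``by weak lower semicontinuity of the $\ltwo$- and $\lq$-norms and \eqref{f1}--\eqref{f2}''. But in the energy identity the $f$-term enters as $+\int f(t,\cdot,w_k)v_k$, and \eqref{f1} only gives $\int f(t,\cdot,w_k)w_k\le -\gamma\|w_k\|_q^q+\|\psi_1\|_1$; passing to the $\limsup$ and using weak l.s.c.\ of $\|\cdot\|_q$ yields an upper bound that lies \emph{above} $\int f(t,\cdot,w)v$, not below it, so the desired inequality does not follow. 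Local strong $L^2$ convergence is also insufficient to force $\int f(t,\cdot,w_k)v_k\to\int f(t,\cdot,w)v$ on $\R^n$ since the pairing is $L^{q_1}$--$L^q$. The fix is to use \eqref{f3} (which you already invoke for uniqueness) at this stage as well: set $C=\|\psi_4\|_{L^\infty((\tau,\tau+T)\times\R^n)}$ and work with the combined operator $\widetilde A(v):=A(v+\eps h z)-f(t,\cdot,v+\eps h z)+Cv$, which is monotone by \eqref{f3} and the monotonicity of $A$; adjust your substitution to $\mu'(t)=\alpha\eta(\theta_t\omega)+C$ so that the remaining zeroth-order coefficient is $\lambda>0$, and then the limsup comparison needed for Minty involves only $-\tfrac12\|\hat v_k(t)\|^2$, $-\lambda\int\|\hat v_k\|^2$, and linear terms, all of which behave correctly. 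This simultaneously identifies $\chi=A(v+\eps h z)$ and re-confirms $\phi=f(t,\cdot,v+\eps h z)$. The paper hides exactly this point behind the reference to \cite{wan8}; the argument there is carried out on a bounded domain and uses \eqref{f3} in the same way.
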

  
  \begin{proof} 
  Let $T>0$,  $t_0 \in [\tau, \tau +T]$ and $v_k
  (t, \tau, \omega, v_\tau)$ be the solution of 
  system \eqref{veqk1}-\eqref{veqk3} defined
  in $\o_k$.   Extend $v_k$
  to the entire space $\R^n$
  by setting $v_k =0$    on $\R^n \setminus \o_k$
  and denote this  extension still by
  $v_k$.
    By Lemma \ref{lemvk} we find that 
 there exist
 $\widetilde{v}\in \ltwo$,
 $v \in L^\infty (\tau, \tau +T; \ltwo)  
 \bigcap L^p(\tau, \tau +T; V) \bigcap L^q(\tau, \tau +T; \lq) $,
 $\chi_1
 \in  L^{q_1} (\tau, \tau +T;  L^{q_1}({\R^n}) ) $,
  $\chi_2
 \in  L^{p_1} (\tau, \tau +T;  V^* ) $
 such that, up to a subsequence,
 \be\label{pexiv_1}
 v_k \to v \ \mbox{weak-star  in } \ 
 L^\infty (\tau, \tau +T; \ltwo ),
 \ee
  \be\label{pexiv_2}
 v_k \to v \ \mbox{weakly  in } \ 
 L^p (\tau, \tau +T;  V )
 \ \mbox{ and } \  L^q(\tau, \tau +T; \lq),
 \ee
  \be\label{pexiv_3}
 A(  v_k+\eps h z(\theta_{t} \omega))   \to  \chi_2  \ \mbox{weakly  in } \ 
 L^{p_1} (\tau, \tau +T;  V^* ),
 \ee 
  \be\label{pexiv_4}
  f(t,x,v_k +\eps h z(\theta_{t} \omega))   \to  \chi_1  \ \mbox{weakly  in } \ 
 L^{q_1} (\tau, \tau +T; L^{q_1} ({\R^n}) ),
 \ee
 and
 \be\label{pexiv_5}
 v_k(t_0,\tau, \omega, v_\tau)
 \to  {\widetilde{v}}
  \ \mbox{weakly  in } \ 
  \ \ltwo.
 \ee
 On the other hand, by 
 the  compactness
 of embedding $W^{1,p}(\o_k)
 \hookrightarrow L^2(\o_k)$ 
 and Lemma \ref{lemvk}, we can choose a further
 subsequence  (not relabeled)
  by a diagonal process such that
  for each $k_0 \in \N$,
   \be\label{pexiv_6}
 v_k \to v
 \ \mbox{ strongly  in } \ 
 L^2(\tau, \tau +T; L^2(\o_{k_0}) ).
 \ee
 By \eqref{veqk1}
 and \eqref{pexiv_1}-\eqref{pexiv_4}
 one can  show  that 
 for every $\xi \in V\bigcap \ltwo  \bigcap \lq$,
 $$
 {\frac {d}{dt}} (v, \xi)
 +    (\chi_2, \xi)_{(V^*, V)}
 +  (\lambda -\alpha \eta(\theta_t \omega) ) (v, \xi)
 $$
 \be\label{pexiv_8}
 =
   (\chi_1, \xi)_{(L^{q_1}, L^q)}
   +  (g(t), \xi) +
   \alpha \eps \eta(\theta_t \omega) z(\theta_t \omega)
   (h, \xi) 
\ee
 in the sense of distribution.
By \eqref{pexiv_8} we find 
\be\label{pexiv_9}
 {\frac {dv}{dt}} 
 =
-    \chi_2
 +    \chi_1 
 - (\lambda -\alpha \eta(\theta_t \omega) )v
 +g 
 +\alpha \eps \eta(\theta_t \omega) z(\theta_t \omega) h
\ee
in 
$L^{p_1}(\tau, \tau +T; V^*)
+  L^{q_1} (\tau, \tau +T; L^{q_1} ({\R^n}) )
+  L^{2} (\tau, \tau +T;  \ltwo  )$,
which along with the fact  
  $v \in  L^\infty(\tau, \tau +T;  \ltwo)
  \bigcap L^p(\tau, \tau +T; V)
\bigcap L^q(\tau, \tau +T; \lq)$
implies (see, e.g., 
 \cite{lion1})  that 
$v \in C ([\tau, \tau +T], \ltwo)$ and 
\be\label{pexiv_12}
{\frac 12} {\frac d{dt}} \| v \|^2
= ( {\frac {dv}{dt}},  v)_{( V^*+ L^{q_1} +L^2, 
V\bigcap L^q \bigcap L^2 )}
 \quad \mbox{for  almost all } \  
t \in (\tau, \tau +T).
\ee
By \eqref{pexiv_1}-\eqref{pexiv_6}, we can argue as
in \cite{wan8} to show that
\be\label{pexiv_20}
 \chi_2 = 
 A(  v+\eps h z(\theta_{t} \omega))  ,
 \quad
   \chi_1  = 
  f(t,x,v  +\eps h z(\theta_{t} \omega) ),
  \quad  v(\tau) =v_\tau \ 
  \text{ and } \   v(t_0) =  { \widetilde{v}}.
 \ee
 By   \eqref{pexiv_8} and \eqref{pexiv_20} 
 we  find that 
  $v$ is a solution of problem \eqref{veq1}-\eqref{veq2}
  in the sense of Definition \ref{defv}.
  On the other hand, 
  by   \eqref{pexiv_9} and \eqref{pexiv_20} 
  we see  that $v$ satisfies  energy equation
  \eqref{enlem1}.
 
 We next  prove   the uniqueness of solutions.
 Let  $v_1$ and $v_2$   be the solutions of \eqref{veq1}
 and    $ {\widetilde{v}} = v_1 -v_2$. Then we have
 $$
 {\frac {d{\widetilde{v}}}{dt}}
 +  
  A(v_1 + \eps h    z(\theta_t \omega) ) -A(v_2 +\eps h    z(\theta_t \omega) )  
 +\lambda {\widetilde{v}}
 $$
 $$
 =\alpha \eta (\theta_t \omega) {\widetilde{v}}
 +   f (t,x,  v_1 + \eps h    z(\theta_t \omega)  )
 -f (t,x,  v_2 + \eps h    z(\theta_t \omega)  ),
 $$
 which along with \eqref{f3} and the   monotonicity 
 of $A$ yields,   for all $t \in [\tau, \tau +T]$,
 $$ 
 {\frac {d}{dt}}
 \norm{\widetilde{v}}  ^2
 \le 2 \alpha  \eta (\theta_t \omega) \norm {\widetilde{v}}  ^2
 +2  
 \int_{\R^n} \psi_4 (t,x) | {\widetilde{v}} |^2 dx
  \le c   \| {\widetilde{v}} \|^2
 $$
 for some   positive constant $c$ depending on $\tau,
 T$ and $\omega$.
 By Gronwall\rq{}s lemma we get,
 for all $t \in [\tau, \tau +T]$,  
 \be\label{pexiv_30}
 \norm{ v_1(t, \tau, \omega, v_{1,\tau} ) - v_2(t, \tau, \omega, v_{2,\tau}) }^2
 \le e^{c  (t-\tau)} \norm{v_{1,\tau} - v_{2,\tau}  }  ^2 .
 \ee
So  the uniqueness and
 continuity of 
 solutions in  initial data  follow  
 immediately.
 
 Note that   \eqref{pexiv_5},
 \eqref{pexiv_20} and the uniqueness of solutions
 imply  that
 the entire  sequence $v_k(t_0, \tau, \omega, v_\tau)
 \to v(t_0, \tau, \omega, v_\tau)$ weakly in $\ltwo$
 for every fixed $t_0 \in [\tau, \tau +T]$
 and     $\omega \in \Omega$.
 By the  measurability  of 
  $v_k (t, \tau, \omega, v_\tau)$ in $\omega$,
  we obtain  the  measurability  of 
  $v (t, \tau, \omega, v_\tau)$ directly. 
 \end{proof}
 
 The following result is useful when
  proving the asymptotic compactness of solutions.

  \begin{lem}
 \label{comv1}
 Let   \eqref{f1}-\eqref{f3}  hold and
 $\{v_n\}_{n=1}^\infty$ be a bounded sequence
 in $\ltwo$.
 Then for every   
  $\tau  \in \R$,  $t >\tau$ and 
 $\omega \in \Omega$,
 there exist  $v_0 \in L^2(\tau, t;   \ltwo)$
 and  a subsequence 
 $\{v(\cdot, \tau, \omega, v_{n_m})\}_{m=1}^\infty$
  of
 $\{v(\cdot, \tau, \omega, v_n)\}_{n=1}^\infty$
 such that 
 $v(s, \tau, \omega, v_{n_m})
 \to v_0 (s) $ in $L^2(\o_k)$
 as $m \to \infty$
 for every  fixed $k \in \N$
 and for almost all $s \in (\tau, t)$.
 \end{lem}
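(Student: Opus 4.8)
The plan is to combine the uniform energy estimates already available from Lemmas~\ref{lemvk} and~\ref{exiv} with the Aubin--Lions compactness lemma on each bounded domain $\o_k$, followed by a diagonal extraction over $k$ and a passage to almost-everywhere convergence in time. Fix $\tau\in\R$, $t>\tau$ and $\omega\in\Omega$, and write $w_n:=v(\cdot,\tau,\omega,v_n)$. Arguing as in the proofs of Lemmas~\ref{lemvk} and~\ref{exiv} --- that is, using the energy equation~\eqref{enlem1}, estimating its right-hand side by Young's inequality together with~\eqref{f1}--\eqref{f2}, and applying Gronwall's inequality on $[\tau,t]$ --- the boundedness of $\{v_n\}$ in $\ltwo$ yields that $\{w_n\}$ is bounded in
\[
L^\infty(\tau,t;\ltwo)\cap L^p(\tau,t;V)\cap L^q(\tau,t;\lq).
\]
Consequently $\{A(w_n+\eps h z(\theta_t\omega))\}$ is bounded in $L^{p_1}(\tau,t;V^*)$ and, by~\eqref{f2}, $\{f(t,x,w_n+\eps h z(\theta_t\omega))\}$ is bounded in $L^{q_1}(\tau,t;L^{q_1}(\R^n))$, so from~\eqref{veq1} the sequence $\{\frac{d}{dt}w_n\}$ is bounded in $L^{p_1}(\tau,t;V^*)+L^2(\tau,t;\ltwo)+L^{q_1}(\tau,t;L^{q_1}(\R^n))$.

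Now fix $k\in\N$ and restrict to $\o_k$: the sequence $\{w_n\}$ is bounded in $L^p(\tau,t;W^{1,p}(\o_k))$, while $\{\frac{d}{dt}w_n\}$ is bounded in a fixed negative-order space $Y_k$ over the bounded domain $\o_k$ into which $L^2(\o_k)$ embeds continuously. Since $\o_k$ is bounded, the embedding $W^{1,p}(\o_k)\hookrightarrow L^2(\o_k)$ is compact, so the Aubin--Lions--Simon lemma produces a subsequence of $\{w_n\}$ converging strongly in $L^2(\tau,t;L^2(\o_k))$. Performing this extraction successively for $k=1,2,\dots$ and passing to the diagonal subsequence $\{w_{n_m}\}$, we obtain a single subsequence converging strongly in $L^2(\tau,t;L^2(\o_k))$ for \emph{every} $k\in\N$; since strong $L^2$-convergence on $\o_{k'}$ restricts to strong $L^2$-convergence on $\o_k$ for $k<k'$, the limits are mutually consistent and define one function $v_0$ with $v_0\in L^2(\tau,t;L^2(\o_k))$ for all $k$. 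Using the uniform bound on $\{w_n\}$ in $L^\infty(\tau,t;\ltwo)$, Fatou's lemma bounds $\int_\tau^t\|v_0(s)\|^2_{L^2(\o_k)}\,ds$ independently of $k$; letting $k\to\infty$ by monotone convergence gives $v_0\in L^2(\tau,t;\ltwo)$.

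Finally, I would upgrade strong $L^2$-in-time convergence on each $\o_k$ to pointwise convergence for a.e.\ $s\in(\tau,t)$, simultaneously for all $k$ along one subsequence. For each $k$ the scalar function $s\mapsto\|w_{n_m}(s)-v_0(s)\|_{L^2(\o_k)}$ tends to $0$ in $L^2(\tau,t)$; combining these via the auxiliary function $s\mapsto\sum_{k\ge1}2^{-k}\min\{1,\|w_{n_m}(s)-v_0(s)\|_{L^2(\o_k)}\}$, which converges to $0$ in $L^1(\tau,t)$ (dominated convergence applied term-by-term to the series, each term being dominated by $2^{-k}(t-\tau)$), and extracting a further subsequence along which it converges to $0$ for a.e.\ $s$, one gets $w_{n_m}(s)\to v_0(s)$ in $L^2(\o_k)$ for a.e.\ $s\in(\tau,t)$ and every $k\in\N$, which is exactly the assertion of the lemma. (Equivalently, one may iterate: for each $k$ extract a subsequence with a.e.-in-time $L^2(\o_k)$-convergence, then take a second diagonal subsequence.) The substantive input is the a priori estimate of the first paragraph, which is already in hand; the only mildly delicate point is this last step --- producing a single subsequence that works for a.e.\ $s$ and all $k$ at once.
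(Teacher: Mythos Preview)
Your proposal is correct and follows essentially the same route as the paper: the paper likewise obtains strong convergence in $L^2(\tau,t;L^2(\o_k))$ for every $k$ via the Aubin--Lions argument on bounded balls (it simply cites ``the proof of~\eqref{pexiv_6}''), then passes to a.e.-in-time convergence simultaneously for all $k$ by the iterative subsequence-plus-diagonal scheme that you describe as your ``equivalently'' option. The only cosmetic differences are that the paper takes $v_0\in L^2(\tau,t;\ltwo)$ directly from weak-$*$ compactness in $L^\infty(\tau,t;\ltwo)$ rather than via Fatou/monotone convergence, and it does not use your summed metric trick but goes straight to the second diagonal extraction.
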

 
 \begin{proof}
 Let $T$ be  a sufficiently large  number 
     such that
 $t \in (\tau, \tau +T]$. 
 Following the proof of \eqref{pexiv_6},  we 
 can show   that
    there  exists ${\widetilde{v}} \in L^2(\tau,  \tau +T;  \ltwo)$
 such   that,  up to  a subsequence,
 $$
  v(\cdot, \tau, \omega, v_{n} ) \to   {\widetilde{v}}
  \ \mbox{  strongly   in } \ L^2( \tau,  \tau+T;   L^2(\o_k ) )
  \quad \text{ for every }  k \in \N.
 $$
 Thus, for $k=1$,  there exist  a set $I_1 \subseteq [\tau, \tau +T]$
 of measure zero and a subsequence  
 $  v(\cdot, \tau, \omega, v_{n_1} )$ such that
 $$
  v(s, \tau, \omega, v_{n_1} ) \to   {\widetilde{v}} (s)
  \ \mbox{    in } \   L^2(\o_1)   
  \quad \mbox{    for  all }
 \    s \in [\tau,  \tau +T]\setminus I_1 .
 $$
  Similarly,  for $k=2$,  there
   exist  a set $I_2 \subseteq [\tau, \tau +T]$
 of measure zero and a subsequence  
 $  v(\cdot, \tau, \omega, v_{n_2} )$ of
 $  v(\cdot, \tau, \omega, v_{n_1} )$ 
   such that
$$
  v(s, \tau, \omega, v_{n_2} ) \to   {\widetilde{v}} (s)
  \ \mbox{    in } \   L^2(\o_2)   
  \quad \mbox{    for  all }
 \    s \in [\tau,  \tau +T]\setminus I_2 .
  $$
  Repeating this process we find that
  for each $k \in \N$, 
  there
   exist  a set $I_k \subseteq [\tau, \tau +T]$
 of measure zero and a subsequence  
 $  v(\cdot, \tau, \omega, v_{n_k} )$ of
 $  v(\cdot, \tau, \omega, v_{n_{k-1}} )$ 
   such that
 $$
  v(s, \tau, \omega, v_{n_k} ) \to   {\widetilde{v}}  (s)
  \ \mbox{    in } \   L^2(\o_k)   
  \quad \mbox{    for  all }
 \    s \in [\tau,  \tau +T]\setminus I_k .
 $$
  Let $I =\bigcup_{k=1}^\infty I_k$. Then
  by a diagonal process,  we infer that
  there exists a subsequence (which is
  still denoted by    
 $  v(\cdot, \tau, \omega, v_{n} )$)
   such that
 \be\label{pcomv1_1}
  v(s, \tau, \omega, v_{n} ) \to  {\widetilde{v}}  (s)
  \ \mbox{    in } \   L^2(\o_k)   
  \quad \mbox{    for  all }
 \    s \in [\tau,  \tau +T]\setminus I 
 \text{ and }   k   \in \N .
  \ee
  Note that $I$ has measure zero
  and   $t\in (\tau, \tau +T]$, which along with 
   \eqref{pcomv1_1}  completes  the proof.
 \end{proof}

  Based on Lemma \ref{exiv},  we  can
  define a continuous cocycle for
  problem  \eqref{seq1}-\eqref{seq2}
  in $\ltwo$.    
Let  $\Phi: \R^+ \times \R \times \Omega \times \ltwo$
$\to \ltwo$ be  a mapping  given by,
for every  $t \in \R^+$,  $\tau \in 
\R$, $\omega \in \Omega$  and $u_\tau \in \ltwo$,
 \be \label{pcycle}
 \Phi (t, \tau,  \omega, u_\tau)  
 =
  v(t+\tau, \tau,  \theta_{ -\tau} \omega,  v_\tau)
  + \eps h(x)   z(\theta_t \omega),
\ee 
where  $v$  is the solution of
system \eqref{veq1}-\eqref{veq2}
with initial condition $v_\tau =    u_\tau - \eps h(x) z(\omega) $
at initial time $\tau$. 
Note that \eqref{uv}  and \eqref{pcycle}
imply 
\be\label{pcycle2}
\Phi (t, \tau,  \omega, u_\tau)  
 =
  u(t+\tau, \tau,  \theta_{ -\tau} \omega,  u_\tau),
  \ee
  where $u$ is a solution of \eqref{seq1}-\eqref{seq2}
  in  some sense.
  Since   the solution $v$ of \eqref{veq1}-\eqref{veq2}
  is $(\calf, \calb(\ltwo))$-measurable in $\omega$
  and continuous in initial data in $\ltwo$, we find that
  $\Phi (t, \tau,  \omega, u_\tau)  $ given by \eqref{pcycle} 
  is also $(\calf, \calb(\ltwo))$-measurable in $\omega$
  and continuous in $u_\tau$ in $\ltwo$.
  In fact,  one can verify  that $\Phi$
  is a continuous cocycle  on $\ltwo$
over 
$(\Omega, \calf, P,  \{\theta\}_{t \in \R})$
in the sense that 
   for all
  $\tau\in \R$,
  $\omega \in   \Omega $
  and    $t, s \in \R^+$,  
\begin{itemize}
\item [(i)]   $\Phi (\cdot, \tau, \cdot, \cdot): \R ^+ \times \Omega \times \ltwo
\to   \ltwo$ is
 $(\calb (\R^+)   \times \calf \times \calb (\ltwo),  
\calb(\ltwo))$-measurable.

\item[(ii)]    $\Phi(0, \tau, \omega, \cdot) $ is the identity map on $ \ltwo$.

\item[(iii)]    $\Phi(t+ s, \tau, \omega, \cdot) =
 \Phi(t,  \tau +s,  \theta_{s} \omega, \cdot) 
 \circ \Phi(s, \tau, \omega, \cdot)$.

\item[(iv)]    $\Phi(t, \tau, \omega,  \cdot):  \ltwo \to   \ltwo$
 is continuous.
    \end{itemize}
    Note that   the cocycle property (iii) of $\Phi$ can be easily proved
    by \eqref{pcycle} and the properties of the solution $v$
    of the pathwise deterministic equation \eqref{veq1}-\eqref{veq2}.
    Our goal is to   establish
     the existence of random    attractors  of $\Phi$
     with 
   an appropriate  attraction domain.
   To specify such an attraction domain, we  consider
    a family  
  $D =\{ D(\tau, \omega) \subseteq \ltwo: \tau \in \R, \omega \in \Omega \}$ 
  of   
  bounded nonempty    sets  such that
  for every $\tau \in \R$   and $\omega \in \Omega$, 
 \be
 \label{Dom1}
 \lim_{s \to  - \infty} e^{ {\frac 54} \lambda s
 + 2\alpha  \int^0_s   \eta (\theta_r \omega) dr }
  \| D( \tau + s, \theta_{s} \omega ) \|^2 =0,
 \ee 
   where 
   $\| S \|=  \sup\limits_{u \in S}
   \| u\|_{\ltwo }$ for a nonempty bounded  subset $S$ of $\ltwo$.
  In  the sequel,   we will   use  $\cald$
   to denote the collection of all families 
   with property \eqref{Dom1}:
 \be
 \label{Dom2}
\cald  = \{ 
   D =\{ D(\tau, \omega) \subseteq \ltwo:
    \tau \in \R, \omega \in \Omega \}: \ 
 D  \ \mbox{satisfies} \  \eqref{Dom1} \} .
\ee
We   will construct a $\cald$-pullback attractor
$\cala  = \{\cala (\tau, \omega): \tau \in \R,
  \omega \in \Omega \} \in  \cald$ 
  for  $\Phi$ in $\ltwo$ in the following sense : 
 \begin{itemize}
\item [(i)]   $ {\mathcal{A}}$ is measurable
  and
 $ {\mathcal{A}}(\tau, \omega)$ is compact for all $\tau \in \R$
and    $\omega \in \Omega$.

\item[(ii)]   $ {\mathcal{A}}$  is invariant, that is,
for every $\tau \in \R$ and
 $\omega \in \Omega$,
$$ \Phi(t, \tau, \omega,  {\mathcal{A}}(\tau, \omega)   )
=  {\mathcal{A}} ( \tau +t, \theta_{t} \omega
), \ \  \forall \   t \ge 0.
$$

\item[(iii)]   For every
 $B = \{B(\tau, \omega): \tau \in \R, \omega \in \Omega\}
 \in  {\mathcal{D}}$ and for every $\tau \in \R$ and
 $\omega \in \Omega$,
$$ \lim_{t \to  \infty} \text{dist}_{\ltwo} (\Phi(t, \tau -t,
 \theta_{-t}\omega, B(\tau -t, 
 \theta_{-t}\omega) ) ,  {\mathcal{A}} (\tau, \omega ))=0,
$$
where  $\text{dist}_{\ltwo}$    is the Hausdorff   semi-distance
between two sets in $\ltwo$.
 \end{itemize}
 
 We  will apply  the following result from 
   \cite{wan5} to show  the existence of 
   $\cald$-pullback attractors   for $\Phi$.
  Similar results on existence of random attractors
    can be found in 
\cite{bat1, car6,  cra2, fla1,  gess2, schm1}.

\begin{prop}
\label{att}  
 Let $ \cald$ be   the 
 collection given by \eqref{Dom2}.
If   
$\Phi$ is $ \cald$-pullback asymptotically
compact in $\ltwo$ and $\Phi$ has a  
closed  
   measurable  
     $ \cald$-pullback absorbing set
  $K$ in $ \cald$,  then
$\Phi$ has a unique  $ \cald$-pullback
attractor $ \cala $  in $\ltwo$  which is  given  by, 
for each $\tau  \in \R$   and
$\omega \in \Omega$,
$$
 \cala (\tau, \omega)
=\Omega(K, \tau, \omega)
=\bigcup_{B \in  {\mathcal{D}}} \Omega(B, \tau, \omega),
$$
where $\Omega(K)
=\{ \Omega(K, \tau, \omega): \tau \in \R,
\omega \in \Omega\}$
is the $\omega$-limit set of $K$.

If, in addition, 
there is $T>0$  such
that  
$\Phi (t, \tau +T, \omega, \cdot)
=\Phi (t, \tau, \omega, \cdot)$
and $K(\tau +T, \omega) = K(\tau, \omega)$
for   all  $t\in \R^+$,  $\tau \in \R$
and $\omega \in \Omega$,
then the attractor $\cala$ is pathwise $T$-periodic, i.e.,
$\cala(\tau +T, \omega) = \cala(\tau, \omega)$
for   all     $\tau \in \R$
and $\omega \in \Omega$. 
  \end{prop}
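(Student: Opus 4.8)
The statement is an abstract criterion for the existence of $\cald$-pullback attractors, so the plan is to follow the now-standard construction via $\omega$-limit sets: construct a candidate family, verify the three defining properties (measurability/compactness, invariance, pullback attraction), deduce uniqueness, and finally obtain periodicity from uniqueness. A detailed version of this argument is carried out in \cite{wan5}, so I only outline the steps.

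First I would define, for each $\tau\in\R$ and $\omega\in\Omega$, the $\omega$-limit set
$$
\Omega(K,\tau,\omega)=\bigcap_{s\ge 0}\ \overline{\bigcup_{t\ge s}\Phi(t,\tau-t,\theta_{-t}\omega,K(\tau-t,\theta_{-t}\omega))},
$$
and show it coincides with $\bigcup_{B\in\cald}\Omega(B,\tau,\omega)$ (the inclusion $\subseteq$ is trivial since $K\in\cald$; the reverse uses that $K$ pullback-absorbs every $B\in\cald$). I would then use the $\cald$-pullback asymptotic compactness together with the absorbing property to show each $\Omega(K,\tau,\omega)$ is nonempty and compact: any sequence $\Phi(t_n,\tau-t_n,\theta_{-t_n}\omega,x_n)$ with $t_n\to\infty$ and $x_n\in K(\tau-t_n,\theta_{-t_n}\omega)$ is precompact, and the standard characterization of $\omega$-limit sets then gives both nonemptiness and compactness. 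Since $K$ absorbs itself, for $t$ large one has $\Phi(t,\tau-t,\theta_{-t}\omega,K(\tau-t,\theta_{-t}\omega))\subseteq K(\tau,\omega)$, hence $\cala(\tau,\omega)=\Omega(K,\tau,\omega)\subseteq K(\tau,\omega)$, and as $K$ satisfies \eqref{Dom1} it follows that $\cala\in\cald$.

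Next I would establish the pullback attraction property (iii) by the usual contradiction argument: if it failed for some $B\in\cald$, there would be $\delta>0$, $t_n\to\infty$ and $x_n\in B(\tau-t_n,\theta_{-t_n}\omega)$ with $\text{dist}_{\ltwo}(\Phi(t_n,\tau-t_n,\theta_{-t_n}\omega,x_n),\cala(\tau,\omega))\ge\delta$; the $\cald$-pullback asymptotic compactness then produces a subsequence of $\Phi(t_n,\tau-t_n,\theta_{-t_n}\omega,x_n)$ converging to some $y$, which by definition lies in $\Omega(B,\tau,\omega)\subseteq\cala(\tau,\omega)$, contradicting $\delta>0$. For invariance, $\Phi(t,\tau,\omega,\cala(\tau,\omega))=\cala(\tau+t,\theta_t\omega)$, I would use the cocycle property (iii) of $\Phi$ and the continuity of $\Phi(t,\tau,\omega,\cdot)$ on $\ltwo$: the inclusion $\subseteq$ passes a defining limit through the continuous map, while $\supseteq$ pulls a point of the target $\omega$-limit set back one extra step. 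For measurability I would show that $\omega\mapsto\text{dist}_{\ltwo}(y,\cala(\tau,\omega))$ is $(\calf,\calb(\R))$-measurable for each fixed $y\in\ltwo$; since $\ltwo$ is separable this suffices, and it reduces to approximating $\cala(\tau,\omega)$ by forward images $\Phi(t,\tau-t,\theta_{-t}\omega,y_j)$ with $\{y_j\}$ a countable dense subset of the absorbing set and $t$ rational, using the joint measurability in property (i) of $\Phi$. Uniqueness is then immediate: if $\cala'$ is another closed $\cald$-pullback attractor, invariance of $\cala'$ plus attraction by $\cala$ forces $\cala'(\tau,\omega)\subseteq\cala(\tau,\omega)$, and by symmetry the reverse inclusion holds.

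For the last assertion, assuming $\Phi(t,\tau+T,\omega,\cdot)=\Phi(t,\tau,\omega,\cdot)$ and $K(\tau+T,\omega)=K(\tau,\omega)$, I would set $\cala^T(\tau,\omega):=\cala(\tau+T,\omega)$ and check directly that $\cala^T$ is again a closed measurable $\cald$-pullback attractor for $\Phi$: compactness, invariance and attraction for $\cala^T$ are exactly the corresponding statements for $\cala$ read at the shifted time and rewritten using the $T$-periodicity of $\Phi$ and $K$ (and $\cald$ is stable under $\tau\mapsto\tau+T$ since \eqref{Dom1} involves only differences). By the uniqueness just proved, $\cala^T=\cala$, i.e.\ $\cala(\tau+T,\omega)=\cala(\tau,\omega)$ for all $\tau\in\R$ and $\omega\in\Omega$. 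The step I expect to be the main obstacle is the measurability of $\omega\mapsto\cala(\tau,\omega)$: the explicit $\omega$-limit formula involves uncountably many unions and intersections, so making it measurable requires the reduction to rational times and a countable dense set of initial data together with the measurability of the cocycle in property (i); everything else is a routine application of the general theory.
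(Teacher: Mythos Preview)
Your outline is correct and follows the standard $\omega$-limit-set construction, but note that the paper does not actually prove this proposition: it is quoted as a known result from \cite{wan5} (with the $\calf$-measurability refinement attributed to \cite{wan7} in the remark immediately following). So there is no ``paper's own proof'' to compare against; your sketch is precisely the argument one finds in those references, including the identification of measurability as the one delicate point.
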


We remark that  the $\calf$-measurability of the
   attractor   $ {\mathcal{A}}$  was  given   in  
\cite{wan7} and the  measurability of $ {\mathcal{A}}$
   with respect to 
   the $P$-completion
 of $\calf$   was given  in \cite{wan5}.
For our purpose,   we 
further assume  the following condition
on $g$, $\psi_1$  and $\psi_3$:
for every $ \tau \in \R$, 
 \be\label{g1}
\int_{-\infty}^\tau e^{\lambda s}
\left (
\| g(s, \cdot) \|^2 + \| \psi_1 (s, \cdot) \|_{L^1({\R^n})}
+ \| \psi_3 (s, \cdot) \|_{L^{q_1}({\R^n})}^{q_1}
\right ) ds < \infty.
\ee

\section{Uniform Estimates of Solutions}
 \setcounter{equation}{0}
 
 This section is devoted to 
  uniform estimates of  solutions   of 
  \eqref{seq1}  and \eqref{veq1}
  which are needed for proving the
  existence of random attractors for $\Phi$.
  When deriving uniform estimates,  
  the following     positive number
  $\alpha_0$ is useful:
 \be
 \label{alphazero}
\alpha_0 = \frac{1}{8(1+\abs{E(\eta)})}\lambda.
\ee

\begin{lem}
\label{est1}
Let $\alpha_0$ be the  positive number given by \eqref{alphazero}.
 Suppose  \eqref{f1}-\eqref{f3}  and \eqref{g1} hold.
Then for every  $\alpha \le \alpha_0$,
 $\sigma \in \R$,
 $\tau \in \R$, $\omega \in \Omega$   and $D=\{D(\tau, \omega)
: \tau \in \R,  \omega \in \Omega\}  \in \cald$,
 there exists  $T=T(\tau, \omega,  D, \sigma, \alpha)>0$ such that 
 for all $t \ge T$,  the solution
 $v$ of  problem  \eqref{veq1}-\eqref{veq2}     satisfies 
 $$
\norm{v(\sigma, \tau-t,\theta_{-\tau}\omega,v_{\tau-t})}^2 
+
\int_{\tau-t}^{\sigma}e^{\frac{5}{4}\lambda(s-\sigma)
-2\alpha\int_{\sigma-\tau}^{s-\tau}
\eta(\theta_{r}\omega)dr}
\norm{v(s, \tau-t,\theta_{-\tau}\omega,v_{\tau-t})}^2ds 
$$
$$
+
\int_{\tau-t}^{\sigma}e^{\frac{5}{4}
\lambda(s-\sigma)-2\alpha\int_{\sigma-\tau}
^{s-\tau}\eta(\theta_{r}\omega)dr}
\norm{\nabla (v(s, \tau-t,\theta_{-\tau}\omega,v_{\tau-t})
+\eps h(x)z(\theta_{s-\tau} \omega))}^p_pds 
$$
$$
+
\int_{\tau-t}^{\sigma}e^{\frac{5}{4}
\lambda(s-\sigma)-2\alpha\int_{\sigma-\tau}
^{s-\tau}\eta(\theta_{r}\omega)dr}
\norm{v(s, \tau-t,\theta_{-\tau}\omega,
v_{\tau-t})+\eps h(x)z(\theta_{s-\tau} \omega))}^q_qds
\le M,
$$
where $v_{\tau -t}  \in D(\tau -t, \theta_{ -t} \omega)$
  and
  $M=M(\tau, \omega, \sigma, \alpha, \eps) $ is given by
$$
M
= c\int_{-\infty}^{\sigma-\tau}e^{\frac{5}{4}
\lambda(s-\sigma+\tau)-2\alpha\int_{\sigma-\tau}^{s}
\eta(\theta_{r}\omega)dr} 
 \left ( \abs{\eps z(\theta_{s} \omega)}^p 
+ \abs{\eps z(\theta_{s} \omega)}^q 
+ \abs{\alpha \eps \eta(\theta_{s}\omega)z(\theta_{s}\omega )
}^2 \right )ds
$$
$$
+ c\int_{-\infty}^{\sigma-\tau}e^{\frac{5}{4}
\lambda(s-\sigma+\tau)-2\alpha
\int_{\sigma-\tau}^{s}\eta(\theta_{r}\omega)dr}
(\norm{g(s+\tau)}^2 
+ \norm{\psi_1(s+\tau)}_{1}
+\norm{\psi_3(s+\tau)}_{q_1}^{q_1})ds,
$$
with 
  $c$  being  a  positive constant  
   independent of $\tau$, $\omega$,  $D$,
  $\alpha$ and $\eps$.
\end{lem}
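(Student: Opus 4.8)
The plan is to convert the energy equality \eqref{enlem1} into a differential inequality for $\norm{v}^2$ and then integrate it against a weight that absorbs the random coefficient $2\alpha\eta(\theta_t\omega)$. Apply \eqref{enlem1} to $v(\cdot,\tau-t,\theta_{-\tau}\omega,v_{\tau-t})$, so that every $z(\theta_s\omega)$, $\eta(\theta_s\omega)$ becomes $z(\theta_{s-\tau}\omega)$, $\eta(\theta_{s-\tau}\omega)$. Estimate the four terms on the right-hand side of \eqref{enlem1} exactly as in the derivation of \eqref{plvk1_6}: the $\nabla h$ term by Young's inequality, absorbing $\norm{\nabla(v+\eps h z(\theta_{s-\tau}\omega))}^p_p$ into the left and leaving $c\abs{\eps z(\theta_{s-\tau}\omega)}^p$; the $f$-term, after writing $v=(v+\eps h z(\theta_{s-\tau}\omega))-\eps h z(\theta_{s-\tau}\omega)$, by \eqref{f1}-\eqref{f2}, which (after reabsorbing a small multiple of $\norm{v+\eps h z(\theta_{s-\tau}\omega)}^q_q$) contributes $-\gamma\norm{v+\eps h z(\theta_{s-\tau}\omega)}^q_q$, $\norm{\psi_1(s)}_1$, $\norm{\psi_3(s)}^{q_1}_{q_1}$ and $c\abs{\eps z(\theta_{s-\tau}\omega)}^q$; and the $g$- and $h$-terms by Young's inequality, producing $\frac{\lambda}{4}\norm{v}^2$, $c\norm{g(s)}^2$ and $c\abs{\alpha\eps\eta(\theta_{s-\tau}\omega)z(\theta_{s-\tau}\omega)}^2$. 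This gives, for almost every $s\ge\tau-t$,
\begin{equation*}
\frac{d}{ds}\norm{v}^2 + \frac{7}{4}\lambda\norm{v}^2 + \norm{\nabla(v+\eps h z(\theta_{s-\tau}\omega))}^p_p + \gamma\norm{v+\eps h z(\theta_{s-\tau}\omega)}^q_q \le 2\alpha\eta(\theta_{s-\tau}\omega)\norm{v}^2 + F(s),
\end{equation*}
where $F(s)$ is the sum of the forcing terms just listed; this is \eqref{plvk1_6} with $\o_k$ replaced by $\R^n$. Writing $\frac{7}{4}\lambda=\frac{5}{4}\lambda+\frac{1}{2}\lambda$ and keeping $\frac{1}{2}\lambda\norm{v}^2$ on the left retains the $\norm{v}^2$-integral appearing in the conclusion.

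Multiply this inequality by $\mu(s)=e^{\frac{5}{4}\lambda s-2\alpha\int_{\sigma}^{s}\eta(\theta_{r-\tau}\omega)\,dr}$; since $\mu'(s)=\big(\frac{5}{4}\lambda-2\alpha\eta(\theta_{s-\tau}\omega)\big)\mu(s)$, the term $2\alpha\eta\norm{v}^2$ is exactly cancelled, and integrating over $[\tau-t,\sigma]$, dividing by $\mu(\sigma)=e^{\frac{5}{4}\lambda\sigma}$, and substituting $r\mapsto r-\tau$ in the $\eta$-integrals yields
\begin{equation*}
\norm{v(\sigma)}^2 + \int_{\tau-t}^{\sigma}\kappa(s)\left(\frac{\lambda}{2}\norm{v}^2 + \norm{\nabla(v+\eps h z(\theta_{s-\tau}\omega))}^p_p + \gamma\norm{v+\eps h z(\theta_{s-\tau}\omega)}^q_q\right)ds \le \kappa(\tau-t)\norm{v_{\tau-t}}^2 + \int_{\tau-t}^{\sigma}\kappa(s)F(s)\,ds,
\end{equation*}
where $\kappa(s)=e^{\frac{5}{4}\lambda(s-\sigma)-2\alpha\int_{\sigma-\tau}^{s-\tau}\eta(\theta_r\omega)\,dr}$ is precisely the weight in the statement. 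Since $\frac{\lambda}{2},1,\gamma>0$, the left-hand side is at least $\min(\frac{\lambda}{2},1,\gamma)$ times the left-hand side of the asserted inequality; and the change of variable $s\mapsto s-\tau$ in the last integral turns its integrand into that of $M$, so for every $t$ the second term on the right is $\le M$ once $M<\infty$ is known, the constant $c$ in $M$ absorbing $c_3$, $c_4$ and $1/\min(\frac{\lambda}{2},1,\gamma)$.

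It remains to handle the first term on the right and to check $M<\infty$. Factoring
\begin{equation*}
\kappa(\tau-t)=e^{\frac{5}{4}\lambda(\tau-\sigma)-2\alpha\int_{\sigma-\tau}^{0}\eta(\theta_r\omega)\,dr}\;e^{-\frac{5}{4}\lambda t+2\alpha\int_{-t}^{0}\eta(\theta_r\omega)\,dr},
\end{equation*}
the first factor is independent of $t$, while $v_{\tau-t}\in D(\tau-t,\theta_{-t}\omega)$ together with \eqref{Dom1} at $s=-t$ gives $e^{-\frac{5}{4}\lambda t+2\alpha\int_{-t}^{0}\eta(\theta_r\omega)\,dr}\norm{v_{\tau-t}}^2\le e^{-\frac{5}{4}\lambda t+2\alpha\int_{-t}^{0}\eta(\theta_r\omega)\,dr}\norm{D(\tau-t,\theta_{-t}\omega)}^2\to0$ as $t\to\infty$. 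Hence there is $T=T(\tau,\omega,D,\sigma,\alpha)>0$ such that for $t\ge T$ this term is dominated by the forcing integral, and the bound $\le M$ follows.

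The finiteness of $M$ is the real crux and the main obstacle. The terms built from $z$ alone are harmless, since $\abs{z(\theta_s\omega)}/\abs{s}\to0$ forces their integrands to grow at most polynomially against an exponentially decaying weight. The delicate point is the weight $e^{\frac{5}{4}\lambda(s+\tau-\sigma)-2\alpha\int_{\sigma-\tau}^{s}\eta(\theta_r\omega)\,dr}$ as $s\to-\infty$: temperedness of $\eta$ alone would give only a subexponential bound on $\int_{\sigma-\tau}^{s}\eta$, which is not enough, so I would instead use the ergodic theorem, $\frac{1}{s}\int_{0}^{s}\eta(\theta_r\omega)\,dr\to E(\eta)$ as $s\to-\infty$, to obtain $\big|\int_{0}^{s}\eta(\theta_r\omega)\,dr\big|\le(1+\abs{E(\eta)})\abs{s}$ for all $s$ below some $s_0(\omega)$. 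Combined with the hypothesis $\alpha\le\alpha_0=\frac{\lambda}{8(1+\abs{E(\eta)})}$ from \eqref{alphazero}, this yields $2\alpha\big|\int_{0}^{s}\eta(\theta_r\omega)\,dr\big|\le\frac{\lambda}{4}\abs{s}$, so the weight is $\le c\,e^{\lambda s}$ for $s\le s_0(\omega)$, while on $[s_0(\omega),\sigma-\tau]$ it is continuous and the forcing locally integrable. The $g,\psi_1,\psi_3$ part of $M$ is then controlled by $c\int_{-\infty}^{\sigma}e^{\lambda s}\big(\norm{g(s)}^2+\norm{\psi_1(s)}_1+\norm{\psi_3(s)}^{q_1}_{q_1}\big)ds<\infty$, which is exactly hypothesis \eqref{g1}. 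Once \eqref{alphazero} and the ergodic theorem pin the weight down by $c\,e^{\lambda s}$, every remaining estimate is routine.
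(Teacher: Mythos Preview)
Your proof is correct and follows essentially the same route as the paper: both derive the differential inequality \eqref{plvk1_6} (on $\R^n$) from the energy identity \eqref{enlem1}, multiply by the integrating factor $e^{\frac{5}{4}\lambda s-2\alpha\int\eta}$, integrate over $[\tau-t,\sigma]$, use \eqref{Dom1} to kill the initial-data term, and invoke the ergodic theorem together with \eqref{alphazero} and \eqref{g1} to show $M<\infty$. The only cosmetic difference is that the paper first derives the estimate for generic $\omega$ and then substitutes $\omega\mapsto\theta_{-\tau}\omega$, whereas you work with the shifted $\omega$ from the outset; you also spell out the ergodic-theorem step that the paper summarizes in one line.
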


\begin{proof}  
Using energy equation \eqref{enlem1}
and 
following the proof of \eqref{plvk1_6},
 we  obtain
  \be
  \label{pest1_1}
\frac{d}{dt}\norm{v}^2 
+ {\frac{7}{4}}  \lambda \norm{v}^2 
+ \ii \abs{\nabla (v+\eps hz(\theta_{t} \omega))}^p_p  dx
+ \gamma
\ii \abs{v+\eps h z(\theta_{t} \omega))}^q_q dx
$$
$$
\leq 2\alpha\eta(\theta_t\omega)\norm{v}^2 
+ c_3 \left (  \abs{\eps z(\theta_{t} \omega)}^p 
+   \abs{\eps z(\theta_{t} \omega)}^q 
+     \abs{\alpha \eps \eta(\theta_t\omega)z(\theta_t\omega) }^2
\right )
$$
$$
 + c_4 \left (
 \norm{g(t)}^2\
 + \norm{\psi_1(t)}_{1}+   \norm{\psi_3(t)}_{q_1}^{q_1}
 \right ) .
\ee
Multiplying \eqref{pest1_1} 
by $e^{\frac{5}{4}\lambda t-2\alpha\int^{t}_{0}
\eta(\theta_{r}\omega)dr}$, and
then  integrating from $\tau -t$ to $\sigma$
with $\sigma \ge \tau -t$, 
we get,  
 \be \label{pest1_2}
\norm{v(\sigma, \tau-t,\omega,v_{\tau-t})}^2
 + \frac{\lambda}{2} \int_{\tau-t}^{\sigma}
 e^{\frac{5}{4}\lambda(s-\sigma)
 -2\alpha\int_{\sigma}^{s}
 \eta(\theta_{r}\omega)dr}
 \norm{v(s, \tau-t,\omega,v_{\tau-t})}^2ds
 $$
 $$
+ \int_{\tau-t}^{\sigma}e^{\frac{5}{4}
\lambda(s-\sigma)-2\alpha\int_{\sigma}^{s}
\eta(\theta_{r}\omega)dr}
\norm{\nabla (v(s, \tau-t,\omega,v_{\tau-t})
+\eps hz(\theta_{s} \omega))}^p_pds
$$
$$
+ \gamma\int_{\tau-t}^{\sigma}
e^{\frac{5}{4}\lambda(s-\sigma)
-2\alpha\int_{\sigma}^{s}\eta(\theta_{r}\omega)dr}
\norm{v(s, \tau-t,\omega,v_{\tau-t})
+\eps h z(\theta_{s} \omega) }^q_qds
$$
$$
\leq e^{\frac{5}{4}\lambda(\tau-t-\sigma)
-2\alpha\int_{\sigma}^{\tau-t}\eta(\theta_{r}\omega)dr}
\norm{v_{\tau-t}}^2  
$$
$$
+ c_3 \int_{\tau-t}^{\sigma}e^{\frac{5}{4}
\lambda(s-\sigma)-2\alpha\int_{\sigma}^{s}
\eta(\theta_{r}\omega)dr}( \abs{\eps z(\theta_{s} \omega)}^p
 +  \abs{\eps z(\theta_{s} \omega)}^q 
 +  \abs{\alpha \eps \eta(\theta_{s}\omega)z(\theta_s\omega)}^2
 )ds
 $$
 $$
+ c_4 \int_{\tau-t}^{\sigma}
e^{\frac{5}{4}\lambda(s-\sigma)
-2\alpha\int_{\sigma}^{s}\eta(\theta_{r}\omega)dr}
 \left (
 \norm{g(s)}^2
 + 
 \norm{\psi_1(s)}_{1}+\norm{\psi_3(s)}_{q_1}^{q_1} 
 \right )
 ds.
\ee
 Replacing $\omega$ with $\theta_{-\tau}\omega$
  in \eqref{pest1_2},  we get
 \be
 \label{pest1_4}
\norm{v(\sigma, \tau-t,\theta_{-\tau}\omega,v_{\tau-t})}^2 
+ \frac{\lambda}{2} \int_{\tau-t}^{\sigma}
e^{\frac{5}{4}\lambda(s-\sigma)
- 2\alpha\int_{\sigma-\tau}^{s-\tau}\eta(\theta_{r}\omega)dr}
\norm{v(s, \tau-t,\theta_{-\tau}\omega,v_{\tau-t})}^2ds
$$
$$
+ \int_{\tau-t}^{\sigma}e^{\frac{5}{4}\lambda(s-\sigma)
-2\alpha\int_{\sigma-\tau}^{s-\tau}\eta(\theta_{r}\omega)dr}
\norm{\nabla (v(s, \tau-t,\theta_{-\tau}\omega,v_{\tau-t})
+\eps h z(\theta_{s-\tau} \omega))}^p_pds
$$
$$
+ \gamma\int_{\tau-t}^{\sigma}e^{\frac{5}{4}
\lambda(s-\sigma)-2\alpha\int_{\sigma-\tau}^{s-\tau}
\eta(\theta_{r}\omega)dr}
\norm{v(s, \tau-t,\theta_{-\tau}\omega,v_{\tau-t})
+\eps h z(\theta_{s-\tau} \omega)}^q_qds
$$
$$
\leq e^{\frac{5}{4}\lambda
 (\tau -t -\sigma)+2\alpha\int^{\sigma -\tau}_{-t}
\eta(\theta_{r}\omega)dr}  \norm{v_{\tau-t}}^2
$$
$$
+ c_3 \int_{-t}^{\sigma-\tau}e^{\frac{5}{4}
\lambda(s +\tau -\sigma)-2\alpha\int_{\sigma -\tau}^{s}
\eta(\theta_{r}\omega)dr}( \abs{\eps z(\theta_{s} \omega)}^p
 +  \abs{\eps z(\theta_{s} \omega)}^q 
 +  \abs{\alpha \eps \eta(\theta_{s}\omega)z(\theta_s\omega)}^2
 )ds
 $$
 $$
+ c_4 
 \int_{-t}^{\sigma-\tau}e^{\frac{5}{4}
\lambda(s +\tau -\sigma)-2\alpha\int_{\sigma -\tau}^{s}
\eta(\theta_{r}\omega)dr}
 \left (
 \norm{g(s+\tau)}^2
 + 
 \norm{\psi_1(s+\tau)}_{1}+\norm{\psi_3(s+\tau)}_{q_1}^{q_1} 
 \right )
 ds.
 \ee
 By the ergodicity of $\eta$, \eqref{alphazero}  and \eqref{g1}
 one can verify  that   for all $\alpha \le \alpha_0$, 
 \be\label{pest1_7}
  \int_{-\infty}^{\sigma-\tau}e^{\frac{5}{4}
\lambda(s +\tau -\sigma)-2\alpha\int_{\sigma -\tau}^{s}
\eta(\theta_{r}\omega)dr}
 \left (
 \norm{g(s+\tau)}^2
 + 
 \norm{\psi_1(s+\tau)}_{1}+\norm{\psi_3(s+\tau)}_{q_1}^{q_1} 
 \right )
 ds <\infty.
 \ee
 Similarly, by the temperedness of $\eta$  and  $z$, we can prove  that
 for all  $\alpha \le \alpha_0$,
 \be\label{pest1_9}
 \int_{-\infty}^{\sigma-\tau}e^{\frac{5}{4}
\lambda(s +\tau -\sigma)-2\alpha\int_{\sigma -\tau}^{s}
\eta(\theta_{r}\omega)dr}( \abs{\eps z(\theta_{s} \omega)}^p
 +  \abs{\eps z(\theta_{s} \omega)}^q 
 +  \abs{\alpha \eps \eta(\theta_{s}\omega)z(\theta_s\omega)}^2
 )ds <\infty.
 \ee
 Since $v_{\tau -t} \in    D(\tau-t, \theta_{-t}\omega)$ 
 and $D \in \cald$, by \eqref{Dom1}-\eqref{Dom2} we obtain
 $$
 e^{\frac{5}{4}\lambda
 (\tau -t -\sigma)+2\alpha\int^{\sigma -\tau}_{-t}
\eta(\theta_{r}\omega)dr}  \norm{v_{\tau-t}}^2
 $$
  $$
 \le e^{\frac{5}{4}\lambda
 (\tau  -\sigma)+2\alpha\int^{\sigma -\tau}_{0}\eta(\theta_{r}\omega)dr}
 e^{- \frac{5}{4}\lambda t 
 +2\alpha\int^{0}_{-t}
\eta(\theta_{r}\omega)dr}
  \norm{D(\tau -t, \theta_{-t} \omega)}^2
  \to 0,
 $$
 as $t \to \infty$.
 Therefore, 
  there exists $T = T(\tau, \omega,  D, \sigma, \alpha)>0$ 
  such that for all $t \geq T$, 
  $$
 e^{\frac{5}{4}\lambda
 (\tau -t -\sigma)+2\alpha\int^{\sigma -\tau}_{-t}
\eta(\theta_{r}\omega)dr}  \norm{v_{\tau-t}}^2
$$
$$
\le
 \int_{-\infty}^{\sigma-\tau}e^{\frac{5}{4}
\lambda(s +\tau -\sigma)-2\alpha\int_{\sigma -\tau}^{s}
\eta(\theta_{r}\omega)dr}
 \left (
 \norm{g(s+\tau)}^2
 + 
 \norm{\psi_1(s+\tau)}_{1}+\norm{\psi_3(s+\tau)}_{q_1}^{q_1} 
 \right )
 ds ,
 $$
 which along with
 \eqref{pest1_4}-\eqref{pest1_9}
 concludes    the proof.
  \end{proof}

By   Lemma \ref{est1},  we  obtain  the following estimates.

\begin{lem}
\label{est2} 
 Suppose  \eqref{f1}-\eqref{f3}  and \eqref{g1} hold.
Then for every $\alpha \le \alpha_0$, 
 $\tau \in \R$, $\omega \in \Omega$   and $D=\{D(\tau, \omega)
: \tau \in \R,  \omega \in \Omega\}  \in \cald$,
 there exists  $T=T(\tau, \omega,  D, \alpha)>0$ such that 
 for all $t \ge T$  and ,  the solution
 $v$ of  problem  \eqref{veq1}-\eqref{veq2}     satisfies 
 $$
\norm{v(\tau, \tau-t,\theta_{-\tau}\omega,v_{\tau-t})}^2 
+
\int_{\tau-t}^{\tau}e^{\frac{5}{4}\lambda(s-\tau)
-2\alpha\int_{0}^{s-\tau}
\eta(\theta_{r}\omega)dr}
\norm{v(s, \tau-t,\theta_{-\tau}\omega,v_{\tau-t})}^2ds 
$$
$$
+
\int_{\tau-t}^{\tau}e^{\frac{5}{4}
\lambda(s-\tau)-2\alpha\int_{0}
^{s-\tau}\eta(\theta_{r}\omega)dr}
\norm{\nabla (v(s, \tau-t,\theta_{-\tau}\omega,v_{\tau-t})
+\eps h(x)z(\theta_{s-\tau} \omega))}^p_pds 
$$
\be\label{est2_1}
+
\int_{\tau-t}^{\tau}e^{\frac{5}{4}
\lambda(s-\tau)-2\alpha\int_{0}
^{s-\tau}\eta(\theta_{r}\omega)dr}
\norm{v(s, \tau-t,\theta_{-\tau}\omega,
v_{\tau-t})+\eps h(x)z(\theta_{s-\tau} \omega))}^q_qds
\le R(\tau, \omega, \alpha, \eps),
\ee
where $v_{\tau -t}  \in D(\tau -t, \theta_{ -t} \omega)$
  and
  $R(\tau, \omega,  \alpha, \eps) $ is given by
$$
 R(\tau, \omega,  \alpha, \eps)
= c\int_{-\infty}^{0}e^{\frac{5}{4}
\lambda s -2\alpha\int_{0}^{s}
\eta(\theta_{r}\omega)dr} 
 \left ( \abs{\eps z(\theta_{s} \omega)}^p 
+ \abs{\eps z(\theta_{s} \omega)}^q 
+ \abs{\alpha \eps \eta(\theta_{s}\omega)z(\theta_{s}\omega )
}^2 \right )ds
$$
\be\label{est2_2}
+c\int_{-\infty}^{0}e^{\frac{5}{4}
\lambda s -2\alpha\int_{0}^{s}
\eta(\theta_{r}\omega)dr}  
(\norm{g(s+\tau)}^2 
+ \norm{\psi_1(s+\tau)}_{1}
+\norm{\psi_3(s+\tau)}_{q_1}^{q_1})ds,
\ee
 with 
  $c$  being  a  positive constant   independent of $\tau$, $\omega$,  $D$,
  $\alpha$ and $\eps$.
  In addition,   we have
   \be\label{est2_3}
  \lim_{t \to \infty}
  e^{-{\frac 54} \lambda t + 2 \alpha \int_{-t}^0 \eta (\theta_r\omega) dr}  
    R(\tau- t, \theta_{-t} \omega, \alpha, \eps) =0.
  \ee
\end{lem}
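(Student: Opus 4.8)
The plan is to obtain \eqref{est2_1}--\eqref{est2_2} as the special case $\sigma=\tau$ of Lemma \ref{est1}, and then to derive the limit \eqref{est2_3} by a change of variables that exploits the cocycle identity for $\theta$ together with the finiteness of $R$ already contained in the proof of Lemma \ref{est1}.

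For the first part, I would set $\sigma=\tau$ in the conclusion of Lemma \ref{est1}. The weight $e^{\frac{5}{4}\lambda(s-\sigma)-2\alpha\int_{\sigma-\tau}^{s-\tau}\eta(\theta_r\omega)dr}$ then becomes $e^{\frac{5}{4}\lambda(s-\tau)-2\alpha\int_0^{s-\tau}\eta(\theta_r\omega)dr}$, which is the weight in \eqref{est2_1}; and $M(\tau,\omega,\sigma,\alpha,\eps)$ with $\sigma-\tau=0$ is exactly the quantity $R(\tau,\omega,\alpha,\eps)$ written in \eqref{est2_2}. Hence \eqref{est2_1}--\eqref{est2_2} hold with the same constant $c$ and with $T(\tau,\omega,D,\alpha):=T(\tau,\omega,D,\tau,\alpha)$ provided by Lemma \ref{est1}. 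The finiteness $R(\tau,\omega,\alpha,\eps)<\infty$ is \eqref{pest1_7}--\eqref{pest1_9} specialized at $\sigma=\tau$, and thus relies on \eqref{g1}, the temperedness of $\eta$ and $z$, the ergodicity of $\eta$, and the choice \eqref{alphazero} of $\alpha_0$.

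To prove \eqref{est2_3} I would substitute $\tau\mapsto\tau-t$ and $\omega\mapsto\theta_{-t}\omega$ in \eqref{est2_2}. Since $\theta_r\theta_{-t}\omega=\theta_{r-t}\omega$, the inner integrals transform via $\int_0^s\eta(\theta_r\theta_{-t}\omega)dr=\int_{-t}^{s-t}\eta(\theta_r\omega)dr$ (put $r\mapsto r-t$), the forcing terms become $\norm{g(s+\tau-t)}^2$, $\norm{\psi_1(s+\tau-t)}_1$, $\norm{\psi_3(s+\tau-t)}_{q_1}^{q_1}$, and $z(\theta_s\theta_{-t}\omega)=z(\theta_{s-t}\omega)$. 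Multiplying the result by $e^{-\frac{5}{4}\lambda t+2\alpha\int_{-t}^0\eta(\theta_r\omega)dr}$ and using the identity $-2\alpha\int_{-t}^{s-t}\eta(\theta_r\omega)dr+2\alpha\int_{-t}^0\eta(\theta_r\omega)dr=2\alpha\int_{s-t}^0\eta(\theta_r\omega)dr$ collapses the two exponential factors into $e^{\frac{5}{4}\lambda(s-t)+2\alpha\int_{s-t}^0\eta(\theta_r\omega)dr}$. The substitution $s'=s-t$ then rewrites $e^{-\frac{5}{4}\lambda t+2\alpha\int_{-t}^0\eta(\theta_r\omega)dr}R(\tau-t,\theta_{-t}\omega,\alpha,\eps)$ as
$$
c\int_{-\infty}^{-t}e^{\frac{5}{4}\lambda s'-2\alpha\int_0^{s'}\eta(\theta_r\omega)dr}\Bigl(\abs{\eps z(\theta_{s'}\omega)}^p+\abs{\eps z(\theta_{s'}\omega)}^q+\abs{\alpha\eps\eta(\theta_{s'}\omega)z(\theta_{s'}\omega)}^2+\norm{g(s'+\tau)}^2+\norm{\psi_1(s'+\tau)}_1+\norm{\psi_3(s'+\tau)}_{q_1}^{q_1}\Bigr)ds',
$$
whose integrand is precisely the one defining $R(\tau,\omega,\alpha,\eps)$ in \eqref{est2_2}. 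Since $R(\tau,\omega,\alpha,\eps)<\infty$, this is the tail over $(-\infty,-t)$ of a convergent integral, which tends to $0$ as $t\to\infty$; this is \eqref{est2_3}. There is no new analytic difficulty beyond Lemma \ref{est1}: the only thing needing care is the bookkeeping of the shifts $\theta_{-t}$ and of the two changes of variables in the exponential weights, where a sign slip is the main pitfall.
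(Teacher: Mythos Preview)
Your proposal is correct and follows essentially the same approach as the paper: it too obtains \eqref{est2_1}--\eqref{est2_2} by specializing Lemma \ref{est1} to $\sigma=\tau$, and derives \eqref{est2_3} by the same substitution $(\tau,\omega)\mapsto(\tau-t,\theta_{-t}\omega)$ and change of variables $s'=s-t$, recognizing the result as the tail over $(-\infty,-t)$ of the convergent integral defining $R(\tau,\omega,\alpha,\eps)$.
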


\begin{proof}
\eqref{est2_1}  and \eqref{est2_2}
are special cases  of  Lemma \ref{est1}
for   $\sigma =\tau$.  We now  prove \eqref{est2_3}.
 By  \eqref{est2_2}  we  have
$$  R(\tau- t, \theta_{-t} \omega, \alpha, \eps)
$$
$$
 =
  c\int_{-\infty}^{0}e^{\frac{5}{4}
\lambda s -2\alpha\int_{0}^{s}
\eta(\theta_{r-t}\omega)dr} 
 \left ( \abs{\eps z(\theta_{s-t} \omega)}^p 
+ \abs{\eps z(\theta_{s-t} \omega)}^q 
+ \abs{\alpha \eps \eta(\theta_{s-t}\omega)z(\theta_{s-t}\omega )
}^2 \right )ds
$$
$$
+c\int_{-\infty}^{0}e^{\frac{5}{4}
\lambda s -2\alpha\int_{0}^{s}
\eta(\theta_{r-t}\omega)dr}  
(\norm{g(s+\tau -t)}^2 
+ \norm{\psi_1(s+\tau-t)}_{1}
+\norm{\psi_3(s+\tau-t)}_{q_1}^{q_1})ds
$$
$$
 =
  c\int_{-\infty}^{-t}e^{\frac{5}{4}
\lambda (t+s) -2\alpha\int_{-t}^{s}
\eta(\theta_{r}\omega)dr} 
 \left ( \abs{\eps z(\theta_{s} \omega)}^p 
+ \abs{\eps z(\theta_{s} \omega)}^q 
+ \abs{\alpha \eps \eta(\theta_{s}\omega)z(\theta_{s}\omega )
}^2 \right )ds
$$
$$
+c\int_{-\infty}^{-t}e^{\frac{5}{4}
\lambda (t+s) -2\alpha\int_{-t}^{s}
\eta(\theta_{r}\omega)dr}  
(\norm{g(s+\tau )}^2 
+ \norm{\psi_1(s+\tau)}_{1}
+\norm{\psi_3(s+\tau)}_{q_1}^{q_1})ds.
$$
Therefore we get
$$
  e^{-{\frac 54} \lambda t + 2 \alpha \int_{-t}^0 \eta (\theta_r\omega) dr}  
    R(\tau- t, \theta_{-t} \omega, \alpha, \eps) 
    $$
    $$
 =
  c\int_{-\infty}^{-t}e^{\frac{5}{4}
\lambda s  -2\alpha\int_{0}^{s}
\eta(\theta_{r}\omega)dr} 
 \left ( \abs{\eps z(\theta_{s} \omega)}^p 
+ \abs{\eps z(\theta_{s} \omega)}^q 
+ \abs{\alpha \eps \eta(\theta_{s}\omega)z(\theta_{s}\omega )
}^2 \right )ds
$$
\be\label{pest2_1}
+c\int_{-\infty}^{-t}e^{\frac{5}{4}
\lambda  s  -2\alpha\int_{0}^{s}
\eta(\theta_{r}\omega)dr}  
(\norm{g(s+\tau )}^2 
+ \norm{\psi_1(s+\tau)}_{1}
+\norm{\psi_3(s+\tau)}_{q_1}^{q_1})ds.
\ee
Since the integrals in \eqref{est2_2} are convergent, 
by \eqref{pest2_1}
we obtain 
$ e^{-{\frac 54} \lambda t + 2 \alpha \int_{-t}^0 \eta (\theta_r\omega) dr}  
    R(\tau- t, \theta_{-t} \omega, \alpha, \eps) 
    \to 0$  as $t \to \infty$.
    This completes   the proof.
\end{proof}

Next,  we derive uniform estimates on the
tails of 
 solutions of \eqref{veq1}-\eqref{veq2}
 outside a bounded domain. These estimates  are
 crucial for proving the asymptotic compactness of solutions
 on unbounded domains.

 \begin{lem}
 \label{est3}
Suppose \eqref{f1}-\eqref{f3} and \eqref{g1} hold. 
Then for every $\nu > 0$,  $\alpha \le \alpha_0$,
$\eps>0$, 
 $\tau \in \R$,
$\omega \in \Omega$  and  $D\in \cald$, 
there exists $T = T(\tau, \omega, D,  \alpha, \eps,  \nu) > 0$
 and $K = K(\tau, \omega,  \alpha, \eps,  \nu) \geq 1$
  such that for all $t \geq T$
  and $\sigma \in [\tau -1, \tau]$, 
  the solution $v$ of  
  \eqref{veq1}-\eqref{veq2}   satisfies
$$
\int_{\abs{x}\geq K}
\abs{v(\sigma,  \tau-t,\theta_{-\tau}\omega,v_{\tau-t})}^2dx
 \leq \nu,
$$
where $v_{\tau -t} \in D(\tau -t, \theta_{-t} \omega )$.
In addition,  $  T(\tau, \omega, D,  \alpha, \eps,  \nu) $
and $  K(\tau, \omega, D,  \alpha, \eps,  \nu) $
are  uniform  with respect to  $\eps \in (0,1]  $.
\end{lem}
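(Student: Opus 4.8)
The plan is to apply the standard cut-off (truncation) technique, adapted to the degenerate $p$-Laplace operator. Fix a smooth function $\rho:[0,\infty)\to[0,1]$ with $\rho(s)=0$ for $0\le s\le1$, $\rho(s)=1$ for $s\ge2$, and $|\rho'(s)|\le c$, and set $\rho_k(x)=\rho(|x|^2/k^2)$ for $k\in\N$; then $\nabla\rho_k$ is supported in the annulus $\{k\le|x|\le\sqrt2\,k\}$ with $|\nabla\rho_k|\le c/k$. Testing \eqref{veq1} with $\rho_k v$ (which is legitimate since $\rho_k v\in W^{1,p}(\R^n)\cap\ltwo\cap\lq$; this is carried out either directly via Definition \ref{defv} together with the integration-by-parts identity behind \eqref{pexiv_12}, or by first proving the estimate for the bounded-domain approximations $v_k$ of Lemma \ref{lemvk} and passing to the limit as in Lemma \ref{exiv}), I obtain an energy identity for $\ii\rho_k|v|^2\,dx$ analogous to \eqref{enlem1}.

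The heart of the argument is estimating each term with the cut-off inserted. Writing $w=v+\eps h\,z(\theta_t\omega)$ and integrating by parts, the diffusion term becomes $-\ii|\nabla w|^{p-2}\nabla w\cdot(\rho_k\nabla v+v\nabla\rho_k)\,dx$; in the $\rho_k\nabla v$ piece I substitute $\nabla v=\nabla w-\eps z(\theta_t\omega)\nabla h$, which produces the nonnegative quantity $\ii\rho_k|\nabla w|^p\,dx$ (moved to the left-hand side and then discarded) plus a remainder controlled, via Young's inequality, by $c|\eps z(\theta_t\omega)|^p\|\nabla h\|_p^p$ together with a small multiple of $\|\nabla w\|_p^p$; the $v\nabla\rho_k$ piece is bounded by H\"older's inequality by $\frac ck\|\nabla w\|_p^{p-1}\|v\|_{L^p(k\le|x|\le\sqrt2\,k)}\le\frac ck(\|\nabla w\|_p^p+\|v\|_p^p)$. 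For the nonlinear term I use \eqref{f1}--\eqref{f2} exactly as in \eqref{plvk1_3}, so that $\psi_1$ and $\psi_3$ enter only through the tail integrals $\int_{|x|\ge k}\psi_1(t,x)\,dx$ and $\int_{|x|\ge k}|\psi_3(t,x)|^{q_1}\,dx$; the terms $\ii\rho_k g v\,dx$, $\eps z(\theta_t\omega)\ii\rho_k|\nabla w|^{p-2}\nabla w\cdot\nabla h\,dx$ and $\alpha\eps\eta(\theta_t\omega)z(\theta_t\omega)\ii\rho_k h v\,dx$ are handled by Young's inequality and leave tail integrals of $|g|^2$, $|\nabla h|^p$, $|h|^2$ (and, where needed, $|h|^q$) against the random coefficients. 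Collecting everything gives a differential inequality
\[
\frac{d}{dt}\ii\rho_k|v|^2\,dx+{\frac74}\lambda\ii\rho_k|v|^2\,dx\le 2\alpha\eta(\theta_t\omega)\ii\rho_k|v|^2\,dx+\frac ck\big(\|\nabla w\|_p^p+\|v\|_p^p+\|v\|^2\big)+F_k(t,\omega),
\]
where $F_k(t,\omega)$ gathers all the tail terms (including those with random coefficients) and, for each fixed $\omega$, tends to $0$ locally uniformly in $t$ as $k\to\infty$.

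It remains to multiply by the integrating factor $e^{{\frac54}\lambda t-2\alpha\int_0^t\eta(\theta_r\omega)\,dr}$, integrate from $\tau-t$ to $\sigma$, discard the surviving nonnegative gradient term, and replace $\omega$ by $\theta_{-\tau}\omega$, exactly as in the proof of Lemma \ref{est1}. Three groups of terms must then be pushed below $\nu$. (a)~The initial term, which is $\le e^{{\frac54}\lambda(\tau-t-\sigma)+2\alpha\int_{-t}^{\sigma-\tau}\eta(\theta_r\omega)dr}\|v_{\tau-t}\|^2$; since the $\sigma$-dependent factors are uniformly bounded for $\sigma\in[\tau-1,\tau]$ and $v_{\tau-t}\in D(\tau-t,\theta_{-t}\omega)$ with $D\in\cald$, this tends to $0$ as $t\to\infty$ by \eqref{Dom1}, uniformly in $\sigma$ and in $\eps\in(0,1]$. (b)~The $\frac ck$-terms, whose time-integrals against the weight are bounded --- uniformly in $k$ and in $\eps\in(0,1]$ (every $\eps$-dependent quantity carrying a positive power of $\eps$) --- by a constant multiple of the quantity $M$ of Lemma \ref{est1}, using \eqref{p_inequality} to control $\|v\|_p^p$; hence they are $\le cM/k\to0$ as $k\to\infty$. (c)~The time-integral of $F_k$ against the weight, which tends to $0$ as $k\to\infty$ by the absolute continuity of the finite integral in \eqref{g1} (for the $g,\psi_1,\psi_3$ tails), by $h\in H^2(\R^n)\cap W^{2,q}(\R^n)\subset W^{2,p}(\R^n)$ (for the $h$- and $\nabla h$-tails), and by the temperedness of $z$ and $\eta$ as in \eqref{pest1_9}. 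Choosing $K$ large enough to make (b) and (c) smaller than $\nu/3$ for all $\eps\in(0,1]$, and then $T$ large enough for (a), completes the proof. I expect the main obstacle to be the nonlinear diffusion term: bounding $\ii|\nabla w|^{p-2}\nabla w\cdot v\nabla\rho_k\,dx$ forces one to control $\|v\|_p^p$ integrated against the weight, which is why the full $L^p(\tau-t,\sigma;W^{1,p})\cap L^q(\tau-t,\sigma;L^q)$ bound of Lemma \ref{est1} (via \eqref{p_inequality}) is essential, and one must keep careful track that the factor $c/k$ multiplies only quantities already estimated uniformly in $k$.
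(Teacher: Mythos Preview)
Your proposal is correct and follows essentially the same route as the paper: test \eqref{veq1} with $\rho(|x|^2/k^2)v$, derive the weighted differential inequality via the same splitting of the diffusion term into a good $\ii\rho_k|\nabla w|^p\,dx$ piece plus a $c/k$-remainder, multiply by the integrating factor, shift $\omega\mapsto\theta_{-\tau}\omega$, and then dispose of the three groups of terms exactly as you outline. The only cosmetic differences are that the paper works with the coefficient $\tfrac54\lambda$ rather than $\tfrac74\lambda$ in the differential inequality and, for the $c/k$-terms in step~(b), first reduces uniformly to the case $\sigma=\tau$ (picking up a harmless factor $e^{\frac54\lambda+2\alpha\int_{-1}^0|\eta(\theta_r\omega)|\,dr}$) so as to invoke Lemma~\ref{est2} directly instead of Lemma~\ref{est1}; your appeal to Lemma~\ref{est1} with $\sigma\in[\tau-1,\tau]$ is equivalent once one notes the uniformity of $M$ and $T$ over that compact interval.
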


\begin{proof}
Let $\rho$ be a smooth
 function defined on $\R^{+}$ 
 such that $0\leq \rho(s) \leq 1$ for
  all $s \in \R^{+}$, and 
\begin{equation*} 
   \rho(s) = \left\{
     \begin{array}{ll}
       0  &  \text{ for } 0\leq s \leq 1;\\
       1  &  \text{ for } s \geq 2.
     \end{array}
   \right.
\end{equation*} 
 Multiplying \eqref{veq1}
  by $\rho(\frac{\abs{x}^2}{k^2})v$ and 
  then integrating over $\R^n$ we  get
 \be \label{pest3_1}
\frac{1}{2}\frac{d}{dt}\int_{\R^n}
 \rho(\frac{\abs{x}^2}{k^2})\abs{v}^2dx 
 - \int_{\R^n}
 \rho(\frac{\abs{x}^2}{k^2})\text{div}
 (\abs{\nabla (v+\eps h z(\theta_{t} \omega))}
 ^{p-2}\nabla (v+\eps h z(\theta_{t} \omega)))vdx
 $$
 $$
= ( \alpha\eta(\theta_t\omega)- \lambda )
 \ii \rho(\frac{\abs{x}^2}{k^2})\abs{v}^2dx
+ \int_{R^n}\rho(\frac{\abs{x}^2}{k^2})
 f(t,x,v+\eps h z(\theta_{t} \omega)) v dx
 $$ 
 $$
 + \alpha \eps \eta (\theta_t\omega) z(\theta_t\omega) 
  \ii \rho(\frac{\abs{x}^2}{k^2}) hvdx 
+ \ii \rho(\frac{\abs{x}^2}{k^2})g(t,x)vdx.
\ee
For  the term involving the divergence  we have
 \be \label{pest3_3}
\int_{\R^n}\rho(\frac{\abs{x}^2}{k^2})
\text{div}(\abs{\nabla (v+\eps hz(\theta_{t} \omega))}
^{p-2}\nabla (v+\eps hz(\theta_{t} \omega)))vdx
$$
$$
= -\int_{\R^n}\rho(\frac{\abs{x}^2}{k^2})
\abs{\nabla (v+\eps hz(\theta_{t} \omega))}^{p}dx 
$$
$$
+ \int_{\R^n}\rho(\frac{\abs{x}^2}{k^2})
\abs{\nabla (v+\eps hz(\theta_{t} \omega))}
^{p-2} \nabla (v+\eps hz(\theta_{t} \omega))
\cdot\nabla (\eps hz(\theta_{t} \omega )) dx 
$$
$$
-\int_{\R^n}\rho'(\frac{\abs{x}^2}{k^2}) \frac{2x}{k^2}
\cdot\nabla (v+\eps hz(\theta_{t} \omega)) 
\abs{\nabla (v+\eps hz(\theta_{t} \omega))}^{p-2}vdx 
$$
$$
\leq 
- {\frac 12} \int_{\R^n}\rho(\frac{\abs{x}^2}{k^2})
\abs{\nabla (v+\eps hz(\theta_{t} \omega))}^{p}dx 
+c_1 \int_{\R^n}\rho(\frac{\abs{x}^2}{k^2})
\abs{\nabla(\eps hz(\theta_{t} \omega))}^{p}dx
$$
$$
-\int_{k \leq \abs{x} \leq 2k}
\rho'(\frac{\abs{x}^2}{k^2})
 \frac{2x}{k^2}\cdot\nabla 
(v+\eps hz(\theta_{t} \omega)) 
\abs{\nabla (v+\eps hz(\theta_{t}
 \omega))}^{p-2}vdx 
 $$
 $$
\leq c_1 \int_{\R^n}\rho(\frac{\abs{x}^2}
{k^2})\abs{\nabla(\eps hz(\theta_{t} \omega))}
^{p}dx + \frac{c_2}{k}(\norm{v}
^p_p+\norm{\nabla(v
+\eps hz(\theta_{t} \omega))}^p_p).
\ee
As in \eqref{plvk1_3},   for the nonlinearity $f$   we have
  \be 
  \label{pest3_5}
\int_{R^n}\rho(\frac{\abs{x}^2}{k^2})
f(t,x,v+\eps h z(\theta_{t} \omega))vdx 
 $$
 $$
\leq \int_{R^n}\rho(\frac{\abs{x}^2}{k^2})
f(t,x,v+\eps h z(\theta_{t} \omega))
(v+\eps h z(\theta_{t} \omega))dx 
+ \int_{\R^n}\rho(\frac{\abs{x}^2}{k^2})
\abs{f(t,x,v+\eps h z(\theta_{t} \omega))} \;
\abs{\eps h z(\theta_{t} \omega)}dx
$$
$$
\leq -\gamma\int_{\R^n}\rho(\frac{\abs{x}^2}
{k^2})\abs{v+\eps h z(\theta_{t} \omega))}^q dx 
+ \int_{\R^n}\rho(\frac{\abs{x}^2}{k^2})\psi_1(t,x)dx
$$
$$
 +
\int_{\R^n}\rho(\frac{\abs{x}^2}{k^2})
\psi_2(t,x)\abs{v+\eps h z(\theta_{t} \omega))}
^{q-1}\abs{\eps h z(\theta_{t} \omega)}dx 
+ \int_{\R^n}\rho(\frac{\abs{x}^2}{k^2})
\abs{\psi_3(t,x)\eps h z(\theta_{t} \omega)}dx
$$
$$
\leq -\frac{\gamma}{2}\int_{\R^n}
\rho(\frac{\abs{x}^2}{k^2})
\abs{v+\eps h z(\theta_{t} \omega))}^q dx
 + \int_{\R^n}\rho(\frac{\abs{x}^2}{k^2})
 (\abs{\psi_1(t,x)} + \abs{\psi_3(t,x)}^{q_1})dx 
 $$
 $$
 + c_3 \int_{\R^n}\rho(\frac{\abs{x}^2}{k^2})
 \abs{\eps h z(\theta_{t} \omega)}^q dx.
\ee
Note that
\be 
\label{pest3_7}
\alpha \eps \eta(\theta_t\omega)z(\theta_t\omega)
\int_{R^n}\rho(\frac{\abs{x}^2}{k^2})
hvdx + \int_{R^n}\rho(\frac{\abs{x}^2}{k^2})g(t,x)vdx
$$
$$
\le
  c_4 \int_{R^n}\rho(\frac{\abs{x}^2}{k^2})
  \abs{\alpha \eps \eta(\theta_t\omega)
  z(\theta_t\omega) h} ^2 dx
  $$
  $$
+ c_5
\int_{R^n}\rho(\frac{\abs{x}^2}{k^2})
\abs{g (t,x)}^2dx 
+ \frac{3}{8}\lambda\int_{R^n}
\rho(\frac{\abs{x}^2}{k^2})\abs{v}^2dx.
\ee
It follows   from \eqref{pest3_1}-\eqref{pest3_7} that 
 \be
 \label{pest3_11} 
\frac{d}{dt}\int \rho(\frac{\abs{x}^2}{k^2})\abs{v}^2dx
 + 
   (
 \frac{5}{4}\lambda -2\alpha \eta  (\theta_t\omega)
   )
 \int \rho(\frac{\abs{x}^2}{k^2})
 \abs{v}^2dx 
 $$
 $$
 \leq  
    \frac{c_7 }{k}(\norm{v}^p_p
  +\norm{\nabla(v+\eps hz(\theta_{t} \omega))}^p_p)
  $$
  $$
+ c_7 \int_{\R^n}\rho(\frac{\abs{x}^2}{k^2})
( \abs{g(t,x)}^2+ \abs{\psi_1(t,x)} + \abs{\psi_3(t,x)}^{q_1})dx
$$
$$
 + c_7 \int_{\R^n}\rho(\frac{\abs{x}^2}{k^2})
 (
  \abs{\nabla \eps h z(\theta_{t} \omega)}^p 
  +
 \abs{\eps h z(\theta_{t} \omega)}^q 
 +
 \abs{\alpha \eps \eta  (\theta_{t} \omega)  z(\theta_{t} \omega) h}^2 
 )dx. 
\ee
 Since $h \in H^1(\R^n)
 \bigcap 
  W^{1,q}(\R^n)$
  with $2\le p\le q$, 
  we  find  that for every $\nu > 0$, there exists a $
  K_1=K_1(\nu) \ge 1$ such that for all $k  \ge  K_1$,
 \be
 \label{pest3_13}
c_7 \int_{\R^n}\rho(\frac{\abs{x}^2}{k^2})
 (
  \abs{\nabla \eps h z(\theta_{t} \omega)}^p 
  +
 \abs{\eps h z(\theta_{t} \omega)}^q 
 +
 \abs{\alpha \eps \eta  (\theta_{t} \omega)  z(\theta_{t} \omega) h}^2
 )dx
 $$
 $$
 =c_7 
 \int_{\abs{x} \geq k}
 \rho(\frac{\abs{x}^2}{k^2})
 (
  \abs{\nabla \eps h z(\theta_{t} \omega)}^p 
  +
 \abs{\eps h z(\theta_{t} \omega)}^q 
 +
 \abs{\alpha \eps \eta  (\theta_{t} \omega)  z(\theta_{t} \omega) h}^2
 )dx
 $$
 $$
  \leq \nu
  \left (
    \abs{\eps z(\theta_t\omega)}^p
    + \abs{\eps z(\theta_t\omega)}^q
    + \abs{\alpha \eps \eta  (\theta_{t} \omega)  z(\theta_{t} \omega)  }^2
    \right ).
\ee
By \eqref{pest3_11}-\eqref{pest3_13} we find that
there exists $K_2 =K_2(\nu) \ge K_1$ such that
for all $k \ge K_2$, 
\be
 \label{pest3_15} 
\frac{d}{dt}\int \rho(\frac{\abs{x}^2}{k^2})\abs{v}^2dx
 + 
   (
 \frac{5}{4}\lambda -2\alpha \eta  (\theta_t\omega)
   )
 \int \rho(\frac{\abs{x}^2}{k^2})
 \abs{v}^2dx 
 $$
 $$
 \leq  
    \nu (\norm{v}^p_p
  +\norm{\nabla(v+\eps hz(\theta_{t} \omega))}^p_p)
  $$
  $$
+ c_7 \int_{\abs{x} \ge k} 
( \abs{g(t,x)}^2+ \abs{\psi_1(t,x)} + \abs{\psi_3(t,x)}^{q_1})dx
$$
$$
 + \nu
  \left (
    \abs{\eps z(\theta_t\omega)}^p
    + \abs{\eps z(\theta_t\omega)}^q
    + \abs{\alpha \eps \eta  (\theta_{t} \omega)  z(\theta_{t} \omega)  }^2
    \right ).
\ee
Multiplying \eqref{pest3_15} 
by $e^{\frac{5}{4}\lambda t-2\alpha\int^{t}
_{0}\eta(\theta_{r}\omega)dr}$, 
and integrating from $\tau -t$ to $\sigma$
with $\sigma \ge \tau -t$,
 we get 
 \be
 \label{pest3_17}
\int_{\R^n} \rho(\frac{\abs{x}^2}{k^2})
\abs{v(\sigma, \tau-t,\omega,v_{\tau-t})}^2dx
 \leq e^{\frac{5}{4} \lambda (\tau -t -\sigma) 
 -2\alpha\int_{\sigma}^{\tau-t}
 \eta(\theta_{r}\omega)dr}
 \int_{\R^n} \rho(\frac{\abs{x}^2}{k^2})
 \abs{v_{\tau-t}}^2dx
 $$
 $$
+ \nu \int_{\tau-t}^{\sigma}
e^{\frac{5}{4}\lambda(s- \sigma)
-2\alpha\int_{ \sigma }^{s}
\eta(\theta_{r}\omega)dr}
(\norm{v(s, \tau-t,\omega,v_{\tau-t})}
^p_p+\norm{\nabla(v 
+\eps hz(\theta_{s} \omega))}^p_p)ds
$$
$$
+ \nu \int_{\tau-t}^{ \sigma }
e^{\frac{5}{4}\lambda(s- \sigma  )
-2\alpha\int_{\sigma }^{s}\eta(\theta_{r}\omega)dr}
(
\abs{\eps z(\theta_s\omega)}^p +
\abs{\eps z(\theta_s\omega)}^q 
+ \abs{\alpha \eps \eta(\theta_s\omega)
z(\theta_s\omega)}^2 
)ds
$$
$$
+ c_7 \int_{\tau-t}^{ \sigma  }\int_{\abs{x} \ge k }
e^{\frac{5}{4}\lambda(s- \sigma  )
-2\alpha\int_{\sigma   }^{s}\eta(\theta_{r}\omega)
dr}(\abs{g(s,x)}^2 +\abs{\psi_1(s,x)} 
+ \abs{\psi_3(s,x)}^{q_1}
 )dxds.
\ee
 Replacing $\omega$ with
  $\theta_{-\tau}\omega$ in \eqref{pest3_17},
  after simple calculations,  we  get
  for all $k\ge K_2$  and $\sigma \in [\tau -1, \tau]$,
 \be
 \label{pest3_21}
\int_{\R^n} \rho(\frac{\abs{x}^2}{k^2})
\abs{v( \sigma , \tau-t,\theta_{-\tau}\omega,v_{\tau-t})}^2dx
 \leq e^{\frac{5}{4}\lambda (\tau -t -\sigma)
 +2\alpha\int^{\sigma -\tau}_{-t}
 \eta(\theta_{r}\omega)dr}  
 \norm{v_{\tau-t}}^2 
 $$
 $$
 + \nu \int_{\tau-t}^{ \sigma }
e^{\frac{5}{4}\lambda(s- \sigma )
-2\alpha\int_{\sigma -\tau}^{s-\tau}
\eta(\theta_{r}\omega)dr}
(\norm{v(s, \tau-t,  \theta_{-\tau} \omega,v_{\tau-t})}
^p_p+\norm{\nabla(v 
+\eps hz(\theta_{s-\tau} \omega))}^p_p)ds
$$
  $$
+ \nu \int_{-\infty}^{\sigma -\tau}e^{\frac{5}{4}\lambda 
(s +\tau -\sigma) 
 - 2\alpha\int^{s}_{\sigma -\tau}\eta(\theta_{r}\omega)dr}
 (
 \abs{\eps z(\theta_s\omega)}^p
 +
 \abs{\eps z(\theta_s\omega)}^q 
 + \abs{\alpha \eps \eta(\theta_s\omega)
 z(\theta_s\omega)}^2 
 )ds
 $$
 $$
+ c_7 \int_{-\infty}^{\sigma -\tau}\int_{\abs{x} \ge k}
e^{\frac{5}{4}\lambda  (s +\tau -\sigma) 
-2\alpha\int^{s}_{\sigma -\tau}\eta(\theta_{r}\omega)dr}
(\abs{g(s+\tau,x)}^2 
+\abs{\psi_1(s+\tau,x)}
+ \abs{\psi_3(s+\tau,x)}^{q_1}
 )dxds
 $$
 $$
  \leq 
 e^{\frac{5}{4}\lambda  
+2\alpha\int_{-1}^{0}
\abs{\eta(\theta_{r}\omega)}dr}
 e^{-\frac{5}{4}\lambda t
+2\alpha\int^{0}_{-t}\eta(\theta_{r}\omega)dr}  
 \norm{v_{\tau-t}}^2 
 $$
 $$
 + \nu e^{\frac{5}{4}\lambda  
+2\alpha\int_{-1}^{0}
\abs{\eta(\theta_{r}\omega)}dr}
  \int_{\tau-t}^{ \tau }
e^{\frac{5}{4}\lambda(s- \tau )
-2\alpha\int_{0}^{s-\tau}
\eta(\theta_{r}\omega)dr}
(\norm{v }
^p_p+\norm{\nabla(v 
+\eps hz(\theta_{s-\tau} \omega))}^p_p)ds
$$
  $$
+ \nu
 e^{\frac{5}{4}\lambda  
+2\alpha\int_{-1}^{0}
\abs{\eta(\theta_{r}\omega)}dr}
  \int_{-\infty}^{0}e^{\frac{5}{4}\lambda s
 - 2\alpha\int^{s}_{0}\eta(\theta_{r}\omega)dr}
 (
 \abs{\eps z(\theta_s\omega)}^p
 +
 \abs{\eps z(\theta_s\omega)}^q 
 + \abs{\alpha \eps \eta(\theta_s\omega)
 z(\theta_s\omega)}^2 
 )ds
 $$
 $$
+ c_8 \int_{-\infty}^{0}\int_{\abs{x} \ge k}
e^{\frac{5}{4}\lambda s
-2\alpha\int^{s}_{0}\eta(\theta_{r}\omega)dr}
(\abs{g(s+\tau,x)}^2 
+\abs{\psi_1(s+\tau,x)}
+ \abs{\psi_3(s+\tau,x)}^{q_1}
 )dxds.
\ee
 Since $v_{\tau-t} \in D(\tau-t,\theta_{-\tau}\omega)$, 
 we see that for every $\nu > 0$, 
 $\tau \in \R$,
 $\omega \in \Omega$
 and $\alpha>0$,
 there exists a $T_1(\tau,\omega,D, \alpha, \nu) > 0$ 
 such that for every $t \ge T_1$ and $\sigma \in [\tau-1, \tau]$,
 \be\label{pest3_23}
  e^{\frac{5}{4}\lambda  
+2\alpha\int_{-1}^{0}
\abs{\eta(\theta_{r}\omega)}dr}
 e^{-\frac{5}{4}\lambda t
+2\alpha\int^{0}_{-t}\eta(\theta_{r}\omega)dr}  
 \norm{v_{\tau-t}}^2 
 $$
 $$ \le
 e^{\frac{5}{4}\lambda  
+2\alpha\int_{-1}^{0}
\abs{\eta(\theta_{r}\omega)}dr}
 e^{-\frac{5}{4}\lambda t
+2\alpha\int^{0}_{-t}\eta(\theta_{r}\omega)dr}  
 \norm{D({\tau-t}, \theta_{-t} \omega)}^2 
  \leq \nu .
\ee
Since  $\int_{-\infty}^{0}\ii
e^{\frac{5}{4}\lambda s
-2\alpha\int^{s}_{0}\eta(\theta_{r}\omega)dr}
(\abs{g(s+\tau,x)}^2 
+\abs{\psi_1(s+\tau,x)}
+ \abs{\psi_3(s+\tau,x)}^{q_1}
 )dxds$
 is convergent, we have
$$
 \int_{-\infty}^{0}\int_{\abs{x} \ge k}
e^{\frac{5}{4}\lambda s
-2\alpha\int^{s}_{0}\eta(\theta_{r}\omega)dr}
(\abs{g(s+\tau,x)}^2 
+\abs{\psi_1(s+\tau,x)}
+ \abs{\psi_3(s+\tau,x)}^{q_1}
 )dxds \to 0,
$$
 as $k \to \infty$. 
 Therefore,  there exists
 $K=K_3(\tau, \omega, \alpha, \nu) \ge K_2$
 such that  for all $k \ge K_3$,
 \be\label{pest3_25}
c_8  \int_{-\infty}^{0}\int_{\abs{x} \ge k}
e^{\frac{5}{4}\lambda s
-2\alpha\int^{s}_{0}\eta(\theta_{r}\omega)dr}
(\abs{g(s+\tau,x)}^2 
+\abs{\psi_1(s+\tau,x)}
+ \abs{\psi_3(s+\tau,x)}^{q_1}
 )dxds  \le \nu.
 \ee
 Note that
 $$
  \norm{v(s, \tau-t,  \theta_{-\tau} \omega,v_{\tau-t})}
^p_p
\le
2^{p}
\left (\norm{v(s, \tau-t,  \theta_{-\tau} \omega,v_{\tau-t})
+ \eps h z(\theta_{s-\tau}  \omega) }
^p_p
+ 
 \norm{ 
  \eps h z(\theta_{s-\tau} \omega) }
^p_p
\right ),
$$
which along with
\eqref{p_inequality} and
 Lemma \ref{est2} shows that
 there exists $T_2
 = T_2(\tau, \omega, D, \alpha,  \nu) \ge T_1$
 such that  for all $t \ge T_2$,
 $$
   \int_{\tau-t}^{\tau}
e^{\frac{5}{4}\lambda(s-\tau)
-2\alpha\int_{0}^{s-\tau}
\eta(\theta_{r}\omega)dr}
(\norm{v(s, \tau-t,  \theta_{-\tau} \omega,v_{\tau-t})}
^p_p+\norm{\nabla(v 
+\eps hz(\theta_{s-\tau} \omega))}^p_p)ds
$$
\be\label{pest3_30}
\le  
c_9 R(\tau, \omega, \alpha, \eps)
+    c_{10} 
 \int_{-\infty}^{0}e^{\frac{5}{4}\lambda s
 - 2\alpha\int^{s}_{0}\eta(\theta_{r}\omega)dr}
 \abs{\eps z(\theta_s\omega)}^p ds, 
\ee
where 
$R(\tau, \omega, \alpha, \eps)$
is  the number given by \eqref{est2_2}.
It follows  from \eqref{pest3_21}-\eqref{pest3_30}
that for all $k\ge K_3$,   $t \ge T_2$
and $\sigma \in [\tau -1, \tau]$,
\be
 \label{pest3_32}
\int_{\R^n} \rho(\frac{\abs{x}^2}{k^2})
\abs{v(\sigma, \tau-t,\theta_{-\tau}\omega,v_{\tau-t})}^2dx
 \le 2\nu
 +  \nu
c_{11} R(\tau, \omega, \alpha, \eps)
$$
$$
+ \nu c_{12}
    \int_{-\infty}^{0}e^{\frac{5}{4}\lambda s
 - 2\alpha\int^{s}_{0}\eta(\theta_{r}\omega)dr}
 (
 \abs{\eps z(\theta_s\omega)}^p
 +
 \abs{\eps z(\theta_s\omega)}^q 
 + \abs{\alpha \eps \eta(\theta_s\omega)
 z(\theta_s\omega)}^2 
 )ds.
\ee
Note that  $\rho(\frac{\abs{x}^2}{k^2})= 1$
when $\abs{x}^2 \ge 2 k^2$. This along with
\eqref{pest3_32}  concludes    the proof.
   \end{proof}

The asymptotic compactness of solutions of equation  \eqref{veq1}
is  given  below.

\begin{lem}
\label{asyv}
Suppose  \eqref{f1}-\eqref{f3}  and \eqref{g1} hold.
Then for every $\alpha \le \alpha_0$,
$\eps >0$,
 $\tau \in \R$, $\omega \in \Omega$   and $D=\{D(\tau, \omega)
: \tau \in \R,  \omega \in \Omega\}  \in \cald$,
 the sequence
 $v(\tau, \tau -t_n,  \theta_{-\tau} \omega,   v_{0,n}  ) $   has a
   convergent
subsequence in $\ltwo $ provided
  $t_n \to \infty$ and 
 $ v_{0,n}  \in D(\tau -t_n, \theta_{ -t_n} \omega )$.
 \end{lem}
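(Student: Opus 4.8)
The plan is to combine three ingredients already at hand: the precompactness of solutions on bounded subdomains (Lemma~\ref{comv1}), the uniform smallness of the tails outside a large ball (Lemma~\ref{est3}), and the continuous dependence on initial data in $\ltwo$ recorded in \eqref{pexiv_30}. Throughout write $\omega'=\theta_{-\tau}\omega$ and, for $n$ large enough that $t_n\ge 1$ and $t_n$ exceeds the thresholds of Lemmas~\ref{est1} and~\ref{est3}, set $v_n(s)=v(s,\tau-t_n,\omega',v_{0,n})$ for $s\ge \tau-t_n$. By the uniqueness part of Lemma~\ref{exiv}, the deterministic problem \eqref{veq1}--\eqref{veq2} enjoys the concatenation identity $v_n(s)=v(s,\tau-1,\omega',w_n)$ for $s\ge \tau-1$, where $w_n:=v_n(\tau-1)$. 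Applying Lemma~\ref{est1} with $\sigma=\tau-1$ shows that $\{w_n\}$ is bounded in $\ltwo$.

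Next I would apply Lemma~\ref{comv1} to the bounded sequence $\{w_n\}$, with initial time $\tau-1$, terminal time $\tau$, and fixed sample $\omega'$: passing to a subsequence (not relabeled) there is $v_0\in L^2(\tau-1,\tau;\ltwo)$ such that $v_n(s)\to v_0(s)$ in $L^2(\o_k)$ for every $k\in\N$ and almost every $s\in(\tau-1,\tau)$. Fix one $\sigma\in(\tau-1,\tau)$ belonging to this full-measure set and also satisfying $v_0(\sigma)\in\ltwo$. It remains to promote the convergence $v_n(\sigma)\to v_0(\sigma)$ from $L^2(\o_k)$ (for all $k$) to strong convergence in $\ltwo$. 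This is exactly where Lemma~\ref{est3} is used: its conclusion is uniform over $\sigma\in[\tau-1,\tau]$, so given $\nu>0$ there exist $K$ and a threshold beyond which $\int_{|x|\ge K}|v_n(\sigma)|^2\,dx\le\nu$; combining this with $v_n(\sigma)\to v_0(\sigma)$ in $L^2(\o_K)$ yields, by a routine three-term estimate, $v_n(\sigma)\to v_0(\sigma)$ strongly in $\ltwo$.

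Finally, using the concatenation identity once more, $v(\tau,\tau-t_n,\omega',v_{0,n})=v(\tau,\sigma,\omega',v_n(\sigma))$; since $\sigma$ and $\tau-\sigma\le 1$ are now fixed, the continuous dependence estimate \eqref{pexiv_30} (with initial time $\sigma$ and sample $\omega'$) gives
$$
v(\tau,\tau-t_n,\theta_{-\tau}\omega,v_{0,n})=v(\tau,\sigma,\omega',v_n(\sigma))\ \longrightarrow\ v(\tau,\sigma,\omega',v_0(\sigma))\quad\text{in }\ltwo ,
$$
which produces the desired convergent subsequence.

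The main obstacle is that Lemma~\ref{comv1} delivers strong convergence only at almost every interior time and only in $L^2(\o_k)$, not at the terminal time $\tau$ nor in the full norm of $\ltwo$. The device that overcomes it is to break the evolution at a well-chosen interior instant $\sigma$, upgrade the convergence there to the $\ltwo$ norm via the uniform tail bound, and then flow forward to time $\tau$ by continuous dependence; this is the reason Lemma~\ref{est3} was stated uniformly for $\sigma\in[\tau-1,\tau]$ and Lemma~\ref{est1} was stated at a general time $\sigma$. The only point requiring care beyond invoking these lemmas is the concatenation identity for $v$, which is immediate from uniqueness (Lemma~\ref{exiv}).
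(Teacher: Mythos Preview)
Your proposal is correct and follows essentially the same route as the paper: bound the sequence at time $\tau-1$ via Lemma~\ref{est1}, invoke Lemma~\ref{comv1} to get local $L^2$ convergence at some interior instant $s_0\in(\tau-1,\tau)$, upgrade this to full $\ltwo$ convergence at $s_0$ using the uniform tail estimate of Lemma~\ref{est3} together with $v_0(s_0)\in\ltwo$, and then push to time $\tau$ via the continuous-dependence bound \eqref{pexiv_30}. The only cosmetic difference is that the paper writes the continuous-dependence inequality first and then splits $\norm{v_n(s_0)-v_0}^2$ into near and far parts, whereas you first establish $v_n(\sigma)\to v_0(\sigma)$ in $\ltwo$ and then apply \eqref{pexiv_30}; the content is identical.
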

 
 \begin{proof}
 By Lemma \ref{est1} we find  that 
 there  exists  $N_1 =N_1(\tau, \omega, D, \alpha)>0$
  such that   for all $n \ge N_1$,
   \be\label{pav_1}
   \| v(\tau -1,  \tau -t_n, \theta_{-\tau} \omega, v_{0,n} ) \|
   \le c_1.
  \ee
  Applying Lemma \ref{comv1} to the sequence
  $v(\tau,  \tau - 1, \theta_{-\tau} \omega,
    v(\tau -1,  \tau -t_n, \theta_{-\tau} \omega, v_{0,n} )  )$,
    we find  that there exist $s_0 \in (\tau -1, \tau)$, 
    $v_0 \in \ltwo$
    and a subsequence (not relabeled) such that
    as $n \to \infty$,
   $$
    v(s_0,  \tau - 1, \theta_{-\tau} \omega,
    v(\tau -1,  \tau -t_n, \theta_{-\tau} \omega, v_{0,n} )  )
    \to v_0
    \quad \text{ in } L^2(\o_k)
    \text{ for every }  k \in \N,
   $$
   that is,   as $n \to \infty$,
    \be
    \label{pav_2}
    v(s_0,    \tau -t_n, \theta_{-\tau} \omega, v_{0,n} )
    \to v_0
    \quad \text{ in } L^2(\o_k)
    \text{ for every }  k \in \N.
    \ee
    By \eqref{pexiv_30}
   we get
    $$
    \norm{ v(\tau,  s_0, \theta_{-\tau} \omega,
    v(s_0,  \tau -t_n, \theta_{-\tau} \omega, v_{0,n} ) ) 
    - v(\tau, s_0,  \theta_{-\tau} \omega, v_0)} 
    $$
    $$
    \le
    e^{c_1 (\tau -s_0)}
    \norm{  
   v(s_0,  \tau -t_n, \theta_{-\tau} \omega, v_{0,n} )  -v_0  
    }.
    $$ 
  Since $s_0 \in (\tau-1, \tau)$, we obtain
     \be\label{pav_2a}
    \norm{ v(\tau,  s_0, \theta_{-\tau} \omega,
    v(s_0,  \tau -t_n, \theta_{-\tau} \omega, v_{0,n} ) ) 
    - v(\tau, s_0,  \theta_{-\tau} \omega, v_0)} ^2
    $$
    $$
    \le
     e^{2c_1  }
    \int_{\abs{x} < k}
     \abs{v(s_0,  \tau -t_n, \theta_{-\tau} \omega, v_{0,n} )
     -v_0 }^2 dx
     $$
     $$
     +
      e^{2c_1  }
    \int_{\abs{x} \ge k}
     \abs{v(s_0,  \tau -t_n, \theta_{-\tau} \omega, v_{0,n} )
     -v_0 }^2 dx
     $$
      $$
    \le
     e^{2c_1  }
    \int_{\abs{x} < k}
     \abs{v(s_0,  \tau -t_n, \theta_{-\tau} \omega, v_{0,n} )
     -v_0 }^2 dx
     $$
     $$
     +
     2 e^{2c_1  }
    \int_{\abs{x} \ge k}
    \left (
     \abs{v(s_0,  \tau -t_n, \theta_{-\tau} \omega, v_{0,n} )}^2
     + \abs{v_0}^2 
     \right )
       dx.
     \ee
     Since  $v_0 \in \ltwo$, given $\nu>0$,  there exists
     $K_1 =K_1(\nu) \ge 1$ such that for all $k \ge K_1$,
     \be\label{pav_3}
      2 e^{2c_1  }
    \int_{\abs{x} \ge k}
      \abs{v_0}^2  ds \le \nu.
      \ee
   On the other hand,  by Lemma \ref{est3},
   there exist $N_2=N_2(\tau, \omega, D,  \alpha, \eps, \nu)\ge 1$ 
     and
     $K_2=K_2(\tau, \omega,   \alpha, \eps, \nu)\ge  K_1$ 
     such that  for all $n \ge N_2$  and $k \ge K_2$,
     \be\label{pav_4}
       2  e^{2c_1  }
    \int_{\abs{x} \ge  k}
     \abs{v(s_0,  \tau -t_n, \theta_{-\tau} \omega, v_{0,n} )
       }^2 dx
     \le \nu.
     \ee
     By \eqref{pav_2} we find that there exists
     $N_3 = N_3(\tau, \omega, D,  \alpha, \eps, \nu)\ge N_2$ 
     such that
     for all $n \ge N_3$,
     \be\label{pav_8}
      e^{2c_1  }
    \int_{\abs{x} < K_2}
     \abs{v(s_0,  \tau -t_n, \theta_{-\tau} \omega, v_{0,n} )
     -v_0 }^2 dx
     \le \nu.
     \ee
     It follows   from \eqref{pav_2a}-\eqref{pav_8}
     that  for all $n \ge N_3$, 
       $$
    \norm{ v(\tau,  s_0, \theta_{-\tau} \omega,
    v(s_0,  \tau -t_n, \theta_{-\tau} \omega, v_{0,n} ) ) 
    - v(\tau, s_0,  \theta_{-\tau} \omega, v_0)} ^2
    \le 3\nu,
    $$
    that is, for all $n \ge N_3$, 
      $$
    \norm{ v(\tau,   \tau -t_n, \theta_{-\tau} \omega, v_{0,n}  ) 
    - v(\tau, s_0,  \theta_{-\tau} \omega, v_0)} ^2
    \le 3\nu.
    $$
    Therefore, 
    $ v(\tau,   \tau -t_n, \theta_{-\tau} \omega, v_{0,n}  ) $
    converges to  $  v(\tau, s_0,  \theta_{-\tau} \omega, v_0) $
    in $\ltwo$. This completes   the proof.
   \end{proof}

\section{ Random Attractors}
\setcounter{equation}{0}

  In this section, we prove the existence of $\cald$-pullback attractor
  for \eqref{seq1}-\eqref{seq2} in $\ltwo$
  by  Proposition \ref{att}.
  To this end, we need to establish   the
  existence of    $\cald$-pullback absorbing sets
  and the $\cald$-pullback asymptotic compactness
  of $\Phi$ in $\ltwo$. 
  The existence of  absorbing sets of $\Phi$ is given below.

  \begin{lem}
  \label{lem41}
    Suppose \eqref{f1}-\eqref{f3}
    and \eqref{g1} hold. Then    for every
    $\alpha \le \alpha_0$  and $\eps >0$,  
    the stochastic equation \eqref{seq1}
    with \eqref{seq2}  has   a closed
    measurable $\cald$-pullback absorbing set $K=\{ K(\tau, \omega):
    \tau \in \R, \omega \in \Omega \} \in \cald$ which  is given by
    \be\label{lem41_1}
    K (\tau, \omega) = \{ u \in \ltwo: \| u\|^2 \le
    2\norm{\eps h z(\omega)}^2
    +
    2 R(\tau, \omega, \alpha, \eps)
    \},
    \ee
    where $R(\tau, \omega, \alpha, \eps)$ is the number given by
    \eqref{est2_2}.
  \end{lem}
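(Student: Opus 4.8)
The plan is to read the $\cald$-pullback absorbing property off the pathwise bound in Lemma~\ref{est2} by means of the conjugation \eqref{pcycle}, and then to check the three required properties of $K$ (membership in $\cald$, closedness, measurability) one at a time. The starting point is the identity obtained by specialising \eqref{pcycle} to the pullback situation: for $\tau\in\R$, $\omega\in\Omega$, $t\ge 0$ and $u_{\tau-t}\in\ltwo$,
$$
\Phi(t,\tau-t,\theta_{-t}\omega,u_{\tau-t})
= v(\tau,\tau-t,\theta_{-\tau}\omega,v_{\tau-t})+\eps h\, z(\omega),
\qquad
v_{\tau-t}=u_{\tau-t}-\eps h\, z(\theta_{-t}\omega),
$$
so that
$$
\norm{\Phi(t,\tau-t,\theta_{-t}\omega,u_{\tau-t})}^2
\le 2\,\norm{v(\tau,\tau-t,\theta_{-\tau}\omega,v_{\tau-t})}^2
+2\,\norm{\eps h\, z(\omega)}^2 .
$$

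Next I would show that $K$ absorbs every $B\in\cald$. Given such a $B$, set $\widetilde B(\sigma,\omega)=\{\,u-\eps h\, z(\omega):u\in B(\sigma,\omega)\,\}$; then $\norm{\widetilde B(\tau+s,\theta_s\omega)}^2\le 2\norm{B(\tau+s,\theta_s\omega)}^2+2\norm{\eps h\, z(\theta_s\omega)}^2$, and since $B$ obeys \eqref{Dom1} and, by the temperedness of $z$ and $\eta$ together with $\alpha\le\alpha_0$, the family $\{\eps h\, z(\cdot)\}$ also obeys \eqref{Dom1}, we conclude $\widetilde B\in\cald$. Applying Lemma~\ref{est2} to $\widetilde B$ furnishes $T=T(\tau,\omega,\widetilde B,\alpha)>0$ with
$$
\norm{v(\tau,\tau-t,\theta_{-\tau}\omega,v_{\tau-t})}^2\le R(\tau,\omega,\alpha,\eps)
\qquad\text{for all }t\ge T,\ v_{\tau-t}\in\widetilde B(\tau-t,\theta_{-t}\omega).
$$
Feeding this into the displayed bound on $\norm{\Phi}^2$ and recalling that $u_{\tau-t}\in B(\tau-t,\theta_{-t}\omega)$ implies $v_{\tau-t}\in\widetilde B(\tau-t,\theta_{-t}\omega)$, we get $\Phi(t,\tau-t,\theta_{-t}\omega,u_{\tau-t})\in K(\tau,\omega)$ for all $t\ge T$, which is exactly the $\cald$-pullback absorbing property of $K$ defined by \eqref{lem41_1}.

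It then remains to verify that $K$ itself lies in $\cald$ and is closed and measurable. Closedness is clear since $K(\tau,\omega)$ is the closed ball of $\ltwo$ centred at $0$. For $K\in\cald$ I would use $\norm{K(\tau+s,\theta_s\omega)}^2=2\norm{\eps h\, z(\theta_s\omega)}^2+2R(\tau+s,\theta_s\omega,\alpha,\eps)$: multiplying by $e^{\frac54\lambda s+2\alpha\int_s^0\eta(\theta_r\omega)dr}$, the first term tends to $0$ as $s\to-\infty$ because $\{\eps h\, z(\cdot)\}\in\cald$, while the second term tends to $0$ by \eqref{est2_3} after the substitution $s=-t$. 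For measurability, the radius of $K(\tau,\omega)$ equals $\big(2\norm{\eps h\, z(\omega)}^2+2R(\tau,\omega,\alpha,\eps)\big)^{1/2}$, which is $(\calf,\calb(\R))$-measurable in $\omega$ because $z$ is a random variable and $R(\tau,\cdot,\alpha,\eps)$ in \eqref{est2_2} is an absolutely convergent integral of measurable functions of $z(\theta_s\omega)$ and $\eta(\theta_s\omega)$; a closed ball with fixed centre and measurable radius is a measurable set-valued map, so $K$ is measurable in the required sense.

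The only point with any genuine content is the membership $K\in\cald$, and inside it the decay of $e^{\frac54\lambda s+2\alpha\int_s^0\eta(\theta_r\omega)dr}R(\tau+s,\theta_s\omega,\alpha,\eps)$ as $s\to-\infty$; but this is precisely \eqref{est2_3}, so no new estimate is needed. Everything else is bookkeeping with the conjugation \eqref{pcycle} and the temperedness of $z$ and $\eta$ already exploited in Section~3.
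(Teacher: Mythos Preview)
Your proposal is correct and follows essentially the same route as the paper: the paper also introduces the shifted family $\widetilde D(\tau,\omega)=\{u-\eps h z(\omega):u\in D(\tau,\omega)\}$, invokes temperedness of $z$ to get $\widetilde D\in\cald$, applies Lemma~\ref{est2} to bound $\norm{v(\tau,\tau-t,\theta_{-\tau}\omega,v_{\tau-t})}^2$ by $R(\tau,\omega,\alpha,\eps)$, converts this via \eqref{uv}/\eqref{pcycle2} into the absorbing inclusion, and finally uses \eqref{est2_3} together with temperedness of $z$ for $K\in\cald$ and the measurability of $z$ and $R$ for measurability of $K$. Your write-up is slightly more explicit on the closedness and measurability bookkeeping, but there is no substantive difference.
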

  
  \begin{proof}
  Let $D =\{ D(\tau, \omega): \tau \in \R, \omega \in
  \Omega\} \in \cald$. For every $\tau \in \R$   and
  $\omega \in \Omega$, denote by
 \be\label{plem41_1a}
  {\widetilde{D}}(\tau, \omega)
  =\{ v \in \ltwo: v= u -\eps h z(\omega)
  \text{ for some } u \in D(\tau, \omega) \}.
  \ee
  Since $z$ is tempered, we find that
  the family 
  ${\widetilde{D}}
  = \{{\widetilde{D}}(\tau, \omega), \tau \in \R, \omega \in
  \Omega  \}$ belongs to $\cald$ provided
  $D\in \cald$.
  By \eqref{uv} we have
  \be\label{plem41_1}
  u(\tau, \tau -t, \theta_{-\tau} \omega, u_{\tau -t})
  = 
   v(\tau, \tau -t, \theta_{-\tau} \omega, v_{\tau -t})
   + \eps h z(\omega)  \ 
   \text{ with } \  v_{\tau -t}
   = u_{\tau -t} -\eps h z(\theta_{-t} \omega).
   \ee
   Thus, if $u_{\tau -t}
   \in D(\tau-t, \theta_{-t} \omega) \in \cald$, then
   $v_{\tau -t}
   \in {\widetilde{D}}(\tau-t, \theta_{-t} \omega)
   \in \cald$. 
   By Lemma \ref{est2} we find that
   there exists  $T=T(\tau, \omega, D, \alpha, \eps) >0$
   such that for all $t \ge T$,
    $$\norm{v(\tau, \tau -t, \theta_{-\tau} \omega, v_{\tau -t})}
    ^2
    \le  R(\tau, \omega, \alpha, \eps),
    $$
    where $R(\tau, \omega, \alpha, \eps)$
    is as in \eqref{est2_2}. 
    By \eqref{plem41_1} we get for all $t \ge T$,
    $$
    \norm{u(\tau, \tau -t, \theta_{-\tau} \omega, u_{\tau -t})}
    ^2
    \le  2 \norm{\eps h z(\omega) }^2
    + 2R(\tau, \omega, \alpha, \eps ).
    $$
    This along with 
    \eqref{pcycle2} and \eqref{lem41_1} shows
     that  for all $t \ge T$,
    \be\label{plem41_4}
    \Phi (t, \tau -t, \theta_{-t} \omega,
    D(\tau-t, \theta_{-t} \omega) )
    \subseteq K(\tau, \omega).
    \ee
    On the other hand, by \eqref{est2_3} and the temperedness
    of $z$ we obtain
    \be\label{plem41_6}
    \lim_{t \to \infty}
  e^{-{\frac 54} \lambda t + 2 \alpha \int_{-t}^0 \eta (\theta_r\omega) dr}  
  \norm{
    K(\tau- t, \theta_{-t} \omega)} =0.
    \ee
    By \eqref{plem41_4}-\eqref{plem41_6} we find that
    $K $ given by \eqref{lem41_1} is a closed
    $\cald$-pullback absorbing set of $\Phi$ in $\cald$.
    Note that   the measurability of  $K (\tau, \omega)$ 
    in $\omega \in \Omega$  follows  from that of 
      $z(\omega)$  and 
  $R(\tau, \omega, \alpha, \eps)$  immediately.
  This  completes     the proof.
     \end{proof}

    The following is our main result regarding the 
     existence of
    $\cald$-pullback attractors of $\Phi$.
    
    \begin{thm}
    \label{eatt}
    Suppose  \eqref{f1}-\eqref{f3}  and \eqref{g1} hold.
    Then for every $\alpha \le \alpha_0$  and $\eps>0$,
 the   stochastic equation \eqref{seq1}
 with  \eqref{seq2}  
   has a unique $\cald$-pullback attractor $\cala
   =\{\cala(\tau, \omega):
      \tau \in \R, \ \omega \in \Omega \} \in \cald$
 in $\ltwo$.   
 In addition,  if there is $T>0$  such that
 $f(t,x,s)$, $g(t,x)$,
 $\psi_1(t,x)$ and $\psi_3 (t,x)$ are all
 $T$-periodic in $t$
 for fixed $x \in \R^n$
 and $s\in \R$,   then the attractor $\cala$
 is also $T$-periodic.
 \end{thm}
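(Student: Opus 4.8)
The plan is to obtain the theorem as a direct application of Proposition \ref{att}, whose two hypotheses --- the existence of a closed, measurable $\cald$-pullback absorbing set $K\in\cald$, and the $\cald$-pullback asymptotic compactness of $\Phi$ in $\ltwo$ --- are supplied, respectively, by Lemma \ref{lem41} (which already verifies that the set $K$ in \eqref{lem41_1} is closed, measurable and lies in $\cald$) and by transferring Lemma \ref{asyv} from the pathwise equation \eqref{veq1}--\eqref{veq2} to the cocycle $\Phi$. So the substance of the argument is this transfer, plus, for the periodicity, checking the hypotheses of the periodic part of Proposition \ref{att}.

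For the asymptotic compactness, I would fix $\alpha\le\alpha_0$, $\eps>0$, $\tau\in\R$, $\omega\in\Omega$, $D\in\cald$, a sequence $t_n\to\infty$ and points $u_{0,n}\in D(\tau-t_n,\theta_{-t_n}\omega)$, and set $v_{0,n}=u_{0,n}-\eps h z(\theta_{-t_n}\omega)$. Since $z$ is tempered, the family $\widetilde D$ of \eqref{plem41_1a} belongs to $\cald$, so $v_{0,n}\in\widetilde D(\tau-t_n,\theta_{-t_n}\omega)$ with $\widetilde D\in\cald$, and Lemma \ref{asyv} produces a subsequence along which $v(\tau,\tau-t_n,\theta_{-\tau}\omega,v_{0,n})$ converges in $\ltwo$. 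By \eqref{pcycle2} together with \eqref{uv} and $\theta_\tau\theta_{-\tau}\omega=\omega$ one has $\Phi(t_n,\tau-t_n,\theta_{-t_n}\omega,u_{0,n})=v(\tau,\tau-t_n,\theta_{-\tau}\omega,v_{0,n})+\eps h(x)z(\omega)$, so the same subsequence makes $\Phi(t_n,\tau-t_n,\theta_{-t_n}\omega,u_{0,n})$ converge in $\ltwo$; this is precisely $\cald$-pullback asymptotic compactness. Proposition \ref{att} then yields the unique $\cald$-pullback attractor $\cala\in\cald$ in $\ltwo$, whose $\calf$-measurability (and measurability with respect to the $P$-completion of $\calf$) is the one recalled after that proposition, citing \cite{wan7,wan5}.

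For the periodicity, assuming $f,g,\psi_1,\psi_3$ are $T$-periodic in $t$, I would first prove $\Phi(t,\tau+T,\omega,\cdot)=\Phi(t,\tau,\omega,\cdot)$ for all $t\in\R^+$, $\tau\in\R$, $\omega\in\Omega$. Via \eqref{pcycle} this reduces to $v(t+\tau+T,\tau+T,\theta_{-\tau-T}\omega,v_\tau)=v(t+\tau,\tau,\theta_{-\tau}\omega,v_\tau)$ for every $v_\tau\in\ltwo$. I would introduce $y(s):=v(s+T,\tau+T,\theta_{-\tau-T}\omega,v_\tau)$ for $s\ge\tau$, observe that $z(\theta_r\theta_{-\tau-T}\omega)=z(\theta_{r-\tau-T}\omega)$ and likewise for $\eta$, substitute into \eqref{veq1}, and use the $T$-periodicity of $f$ and $g$ in $t$ to conclude that $y$ solves \eqref{veq1} with $\omega$ replaced by $\theta_{-\tau}\omega$ on $[\tau,\infty)$ with $y(\tau)=v_\tau$; by the uniqueness part of Lemma \ref{exiv}, $y(s)=v(s,\tau,\theta_{-\tau}\omega,v_\tau)$ for all $s\ge\tau$, and evaluating at $s=t+\tau$ gives the claim. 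Next, inspection of \eqref{est2_2} shows $R(\tau,\omega,\alpha,\eps)$ depends on $\tau$ only through $g(s+\tau,\cdot)$, $\psi_1(s+\tau,\cdot)$ and $\psi_3(s+\tau,\cdot)$, so their $T$-periodicity gives $R(\tau+T,\omega,\alpha,\eps)=R(\tau,\omega,\alpha,\eps)$ and hence $K(\tau+T,\omega)=K(\tau,\omega)$ by \eqref{lem41_1}. Applying the periodic part of Proposition \ref{att} with this $T$ then yields $\cala(\tau+T,\omega)=\cala(\tau,\omega)$ for all $\tau\in\R$ and $\omega\in\Omega$.

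I expect the main obstacle to be the periodicity of $\Phi$ in $\tau$: one must keep careful track of how the shift $\tau\mapsto\tau+T$ interacts with the pathwise variable $\theta_{-\tau}\omega$ sitting inside the transformed equation \eqref{veq1}, and the cleanest route is the one above --- realize the time-shifted solution as a solution of the original Cauchy problem and invoke uniqueness --- rather than manipulating the cocycle identities directly. The genuinely hard analytic ingredients, namely the uniform tail estimate of Lemma \ref{est3} and the compactness-in-bounded-domains argument behind Lemma \ref{asyv}, are already in hand, so the remainder is bookkeeping: checking $\widetilde D\in\cald$, that $K\in\cald$ is closed and measurable, and that the hypotheses of Proposition \ref{att} are met verbatim.
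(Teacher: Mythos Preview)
Your proposal is correct and follows essentially the same route as the paper: verify $\cald$-pullback asymptotic compactness of $\Phi$ by transferring Lemma \ref{asyv} via $v_{0,n}=u_{0,n}-\eps h z(\theta_{-t_n}\omega)$ and $\widetilde D\in\cald$, combine with Lemma \ref{lem41}, and apply Proposition \ref{att}. The only cosmetic difference is in the periodicity step: the paper asserts $u(t+\tau+T,\tau+T,\theta_{-\tau-T}\omega,\cdot)=u(t+\tau,\tau,\theta_{-\tau}\omega,\cdot)$ directly from \eqref{pcycle2}, whereas you spell out the underlying uniqueness argument at the level of $v$ and \eqref{veq1}; both amount to the same thing.
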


\begin{proof}
We first prove  that  $\Phi$ is    $\cald$-pullback
asymptotically  compact    in $\ltwo$;
that is, 
  for every  
 $\tau \in \R$, $\omega \in \Omega$,
 $D  \in \cald$,  $t_n \to \infty$
 and $ u_{0,n}  \in D(\tau -t_n, \theta_{ -t_n} \omega )$,
we want  to show that 
 the sequence
 $\Phi (t_n, \tau -t_n,  \theta_{-t_n} \omega,   u_{0,n}  )$
  has a
   convergent
subsequence in $\ltwo $.
 Let  $ v_{0,n} 
 = u_{0,n} - \eps hz(\theta_{-t_n} \omega)$
 and  $\widetilde{D}$ be the family
  given by \eqref{plem41_1a}.
 Since   $ u_{0,n}  \in D(\tau -t_n, \theta_{ -t_n} \omega )$, we find
 that $ v_{0,n} 
   \in {\widetilde{D}}(\tau -t_n, \theta_{ -t_n} \omega )
  \in \cald$. Therefore,  by 
     \eqref{plem41_1}  and 
    Lemma \ref{asyv} we find that
  $u (\tau, \tau -t_n,  \theta_{-\tau} \omega,   u_{0,n}  )$
  has a convergent subsequence in $\ltwo$.
   This together with \eqref{pcycle2} indicates that
    $\Phi (t_n, \tau -t_n,  \theta_{-t_n} \omega,   u_{0,n}  )$
  has a
   convergent
subsequence,  and thus it is
$\cald$-pullback asymptotically compact  in $\ltwo $.
Since $\Phi$ also has a
closed measurable  $\cald$-pullback
absorbing  set $K$ given by \eqref{lem41_1},
by Proposition \ref{att} we get the existence and uniqueness
of $\cald$-pullback attractor $\cala \in \cald$ of $\Phi$
immediately.

Next, we discuss   $T$-periodicity of $\cala$.
Note  that  if  $f$  and $g$ are $T$-periodic in
their first arguments, then the cocycle $\Phi$
is also  $T$-periodic. 
Indeed,  in this case,    
for every
 $t \in \R^+$, $\tau \in \R$ and $\omega \in \Omega$, 
 by \eqref{pcycle2} we have
\be\label{peatt_1}
\Phi (t, \tau +T, \omega,  \cdot )
= u(t+ \tau +T, \tau +T,  \theta_{ -\tau -T} \omega,  \cdot )
=u(t +\tau, \tau, \theta_{ -\tau} \omega,  \cdot )
= \Phi (t, \tau,  \omega,  \cdot ).
\ee
 In addition,  if  $g(t,x)$, $\psi_1(t,x)$ and $\psi_3(t,x)$ are
 all $T$-periodic in $t$, then by
 \eqref{est2_2} and \eqref{lem41_1} we  get
  $ 
K(\tau +T, \omega)
=K(\tau, \omega )$
 for all $\tau \in \R$ and
$\omega \in \Omega$.
This along with
\eqref{peatt_1}
and Proposition \ref{att}
yields  the
$T$-periodicity of $\cala$.
 \end{proof}

\end{document}